\newcommand{\beqnum}{\begin{equation}\begin{array}{lcl}}
\newcommand{\eeqnum}{\end{array}\end{equation}}
\newcommand{\beqnom}{\begin{eqnarray}}
\newcommand{\eeqnom}{\end{eqnarray}}
\newcommand{\beqnc}{\begin{center}\begin{eqnarray}}
\newcommand{\eeqnc}{\end{eqnarray}\end{center}}
\newcommand{\beqnlm}{\begin{equation}\vspace{-.5cm}\begin{array}{lll}}
\newcommand{\eeqnlm}{\end{array}\end{equation}}\vspace{-.5cm}
\newcommand{\beq}{\begin{eqnarray*}}
\newcommand{\eeq}{\end{eqnarray*}}
\newcommand{\bb}{\bar{b}}
\newcommand{\bef}{\begin{figure}[tbh!]}
\newcommand{\enf}{\end{figure}}
\newcommand{\ede}{\hfill \rule [-.3em]{.5em}{.5em}}
\newcommand{\sign}{\hbox{sign}}
\newtheorem{theorem}{Theorem}[section]
\newtheorem{lemma}[theorem]{Lemma}
\newtheorem{proposition}[theorem]{Proposition}
\newtheorem{remark}[theorem]{Remark}
\title {Target-point based path following controller for a car-type vehicle using bounded controls
}
\author{Salah Laghrouche$^{*}$, Mohamed Harmouche$^{*}$  and Yacine Chitour$^{**}$\\
$^{*}$SET Laboratory, UTBM, Belfort, France.\\
$^{**}$L2S, Universit\'e Paris-Sud 11, CNRS, Gif-sur-Yvette, France.\\
\small{mohamed.harmouche@utbm.fr, salah.laghrouche@utbm.fr, yacine.chitour@lss.supelec.fr }\\
}
\date{}
\begin{document}
\maketitle

\section*{Abstract}                          % Abstract of not more than 250 words.
In this paper, we have studied the control problem of target-point based path following for car-type vehicles. This special path following task arises from the needs of vision based guidance systems, where a given target-point located ahead of the vehicle, in the visual range of the camera, must follow a specified path. A solution to this problem is developed through a non linear transformation of the path following problem into a reference trajectory tracking problem, by modeling the target point as a virtual vehicle. Bounded feedback laws must be then used on the real vehicle's angular acceleration and the virtual vehicle's velocity, to achieve stability. The resulting controller is globally asymptotically stable with respect and the proof is demonstrated using Lyapunov based arguments and a bootstrap argument. The effectiveness of this controller has been illustrated through simulations.

\section{Introduction}
In the field of autonomous vehicle guidance, navigation and control, path-following problem of car-type vehicles is of particular interest. Many contemporary researchers have published various techniques and strategies for this problem, such as \cite{SamsonAit,consolini1,Jiang,Samson,Egersted,Bianco_2004}. Among open-loop motion planning techniques, differential flatness approach has been significant in motion planning to drive vehicles on Cartesian paths \cite{Fliess,Rouchon}. In feedback control techniques, larger effort has been made on tracking problems. A backstepping approach has been presented in the context of tracking in \cite{Jiang2}. This approach has also been used in \cite{Changboon}, to develop a controller that is robust against vehicle skidding effects. Do et al. have further improved upon Jiang's backstepping method in \cite{K_DO2_2004} and \cite{K_DO1_2004}, by adding observers to render the controller output-feedback and extending it to tracking and stabilization for parking problems of a vehicle and introducing dynamic update laws to compensate for parametric uncertainty and modeling errors. In \cite{Aguiar} Aguiar et al. have used adaptive switched supervisory control combined with a non linear Lyapunov-based control to ensure the global convergence of the position tracking error to a small neighborhood of the origin.
Bloch and Drakunov \cite{Bloch1996} have used sliding mode control for the stabilization and tracking of a nonholonomic dynamic system. This controller is global and ensures convergence to the neighborhood of the desired trajectory. Lee et al. \cite{Lee2001} have proposed a saturated feedback controller for tracking of a unicycle-type vehicle, using its forward velocity and angular acceleration as control inputs. They have also extended this controller for application on car-type vehicles.

The problem of path following differs from pure stabilization or tracking problems because the path, described by its curvature $\kappa(.)$, is defined in space only, not in time. In this paper, we have addressed the path following control of a robot car-type vehicle using target point.  This control problem arises from camera-vision applications \cite{broggi,Piazzi1}, where the vehicle is guided by a target point ahead of the vehicle, within the visual range of the camera \cite{Bertozzi1,broggi}. The target point is fixed at a known distance $d>0$ from the center of gravity on the axis of the vehicle. The control objective is to drive the vehicle, such that the target point follows the desired path (as shown in Fig. 1). This problem has been addressed in \cite{consolini2} where a local robust path following strategy has been proposed using target point. Their solution is based on an open loop control based on inversion of the nominal model, and a closed loop control for stabilization of the resultant system. The error dynamics have been expressed in the Fr\'{e}net frame associated to the followed path. This technique, also discussed in \cite{MS}, is convenient only when the vehicle is close, positioned and oriented to the path.

In our work, a global asymptotically stable controller is developed by parameterizing the path as a ``virtual vehicle'', which is tracked by the actual vehicle. In this way, the path following problem is converted into a tracking problem, with two control inputs: the angular acceleration of the real vehicle and the velocity of the virtual vehicle. The forward velocity control of the real vehicle is not considered as part of the navigation problem, as it is controlled by other intelligent control systems in practical applications (for example, ABS, ESP \cite{pasillias}). It is instead assumed to be a measured state that is  strictly positive, meaning that the vehicle is in continuous forward motion.

It can be noted that if there is no target point, i.e. $d=0$, then the tracking error model obtained in this study is identical to \cite{Lee2001}, in which tracking has been achieved by using saturation on one control input while the other is unbounded. In our case, the introduction of the target point at a distance makes the dynamics of the tracking error model more complicated. Specifically, the development produces a first order nonlinear non-globally Lipschitz differential equation (see equation~\eqref{omega}) that can blow up in finite time. To overcome this difficulty, our solution necessitates the application of saturated controls for both our control inputs with arbitrary small amplitude. Examples of application of saturated control can be found in \cite{Chitour1995,Liu1996,Yakoubi2006,Yakoubi2007,Yakoubi2007-1,Yakoubi2007-2}. Consequently, if both the control inputs are applied on the real vehicle, then the path following problem developed here becomes equivalent to the generalization of \cite{Lee2001}, as tracking problem with a target-point. 

This paper can be seen as the continuation of the authors' previous work in \cite{ait}, in which a unicycle type vehicle had been considered. However, the arguments of the Lyapunov analysis used for the convergence proof are significantly more involved than that of \cite{ait}, due to the added state of the car type vehicle (essentially an integrator) and the fact that one must keep track of the small amplitudes of the saturations. Therefore, a positive definite function $V$ is designed instead of a global Lyapunov function, whose time derivative along the closed-loop system is strictly negative outside a neighborhood of the origin. The design of $V$ relies on an asymptotic analysis of a Ricatti equation, which is not needed in \cite{ait}. The convergence to zero is then demonstrated using a bootstrap procedure \cite{bootstrap}, i.e., once the system errors converge to a neighborhood, they continue to diminish to a smaller neighborhood, and ultimately converge asymptotically to the origin. The results so obtained can be extended to the case where only the position of the reference trajectory is directly known.

The paper is organized as follows: in Section 2, the vehicle model and reference trajectory parameterization have been presented. In Section 3, the control design has been discussed and the Lyapunov function has been developed. The stability analysis of the closed loop system has been discussed in Section 4. Simulation results have been provided in Section 5, and a conclusion has been presented in Section 6.
%%%

\section{Vehicle model and reference trajectory}
Let us consider a path $\Gamma$ with geodesic curvature $\kappa_r$ whose absolute value is bounded by $\kappa_{max}>0$, i.e.,
for all $t\geq 0$, we have
\begin{equation}\label{kappa0}
|\kappa_r(t)|\leq \kappa_{max}.
\end{equation}
As described in the introduction, $\Gamma$ will be parameterized as a vehicle trajectory with a forward velocity $u(t)$ such that $\Gamma(t)=(p_r(t),q_r(t))$ is described by the following state equations
\begin{eqnarray}
\left \{ \begin{array}{rcl}
\dot{p}_r&=&u\  \cos{\psi_r},\\
\dot{q}_r&=&u\  \sin{\psi_r},\\
\dot{\psi}_r&=&u\  \kappa_r,\\
\dot{\kappa}_r&=&u\  \rho_r,
\end{array} \right.
\end{eqnarray}
where $\psi_r$ represents the angle between the abscissa axis and the velocity vector $(\dot p_r,\dot q_r)^T$, and
$\kappa_r$ is the scalar curvature associated to the parametrization of $\Gamma$ by time $t$. The arclength $s$ of $\Gamma$ is given by $s(t)=s_0+\int_0^tu(\tau)d\tau$ and the scalar curvature $\kappa_r(t)$ is hence equal to  $\kappa_r^*(s(t))$.

The state equations for the vehicle dynamics are %\cite{Morin_Book}
\begin{eqnarray}\label{veh1}
\left \{ \begin{array}{rcl}
\dot{x}&=&V_x\ \cos{\psi},\\
\dot{y}&=&V_x\ \sin{\psi},\\
\dot{\psi}&=&V_x\  \kappa,\\
\dot{\kappa}&=&V_x\  \rho_0.
\end{array} \right.
\end{eqnarray}
These equations represent the vehicle's motion with a velocity $V_x$, along the curve defined by the its geodesic curvature $\kappa$. The control variable $\rho_0$ will be defined later.
Notice that $V_x$ is not necessarily constant, but simply a continuous function of time, which verifies the following hypothesis: there exist two positive constants $0<V_{min}\leq V_{max}$, such that for all $t\geq 0$
\begin{equation}\label{vx0}
V_{min}\leq V_x(t)\leq V_{max}.
\end{equation}
The strict positivity of the lower bound is necessary to derive our subsequent results. Note that path following for a unicycle type of vehicle has been obtained under weaker hypotheses than that of the above equation, cf. in particular the {\it persistent excitation} condition (PEC) \cite{Khalil}. It is not clear to us how to extend the present work only assuming that $V_x$ satisfies the PEC (see Remark \eqref{pec:0}).

For the target point, the equations for the coordinates $p$ and $q$ are defined as
\begin{eqnarray}
\begin{array}{rcl}
p&=x+ d \cos{\psi},\\
q&=y+ d \sin{\psi}.\\
\end{array}
\end{eqnarray}
We will also suppose throughout the paper that
\begin{itemize}
\item [{\textbf (H1)}] $d\kappa_{max}<1$.
\end{itemize}
\begin{remark} The above assumption may be considered as a technical one or a design constraint for positioning the target point. However, it is reasonable to upper bound  the curvature of the reference path in terms of the distance $d$. Indeed,
tracking a circle of radius $d'<d$ with a point fixed at a distance $d$ in front of a vehicle is impossible. To see that, one can see that intuitively of rely on equation \eqref{eq:kap2} given below. At the ligth of the previous example, Hypothesis $(H1)$ is almost optimal.
\end{remark}

The dynamics of the target point in a form similar to \eqref{veh1} can be obtained by deriving the precedent equations. One first gets that
\begin{eqnarray}
\left \{ \begin{array}{rcl}
\dot{p}&=&V_x\ \cos{\psi}  - d\ V_x\ \sin{\psi}\ \kappa, \\
\dot{q}&=&V_x\ \sin{\psi} + d\ V_x\ \cos{\psi}\ \kappa, \\
\end{array} \right.
\end{eqnarray}

\begin{figure}[h]
\begin{center}
\medskip
  \includegraphics[width=10cm]{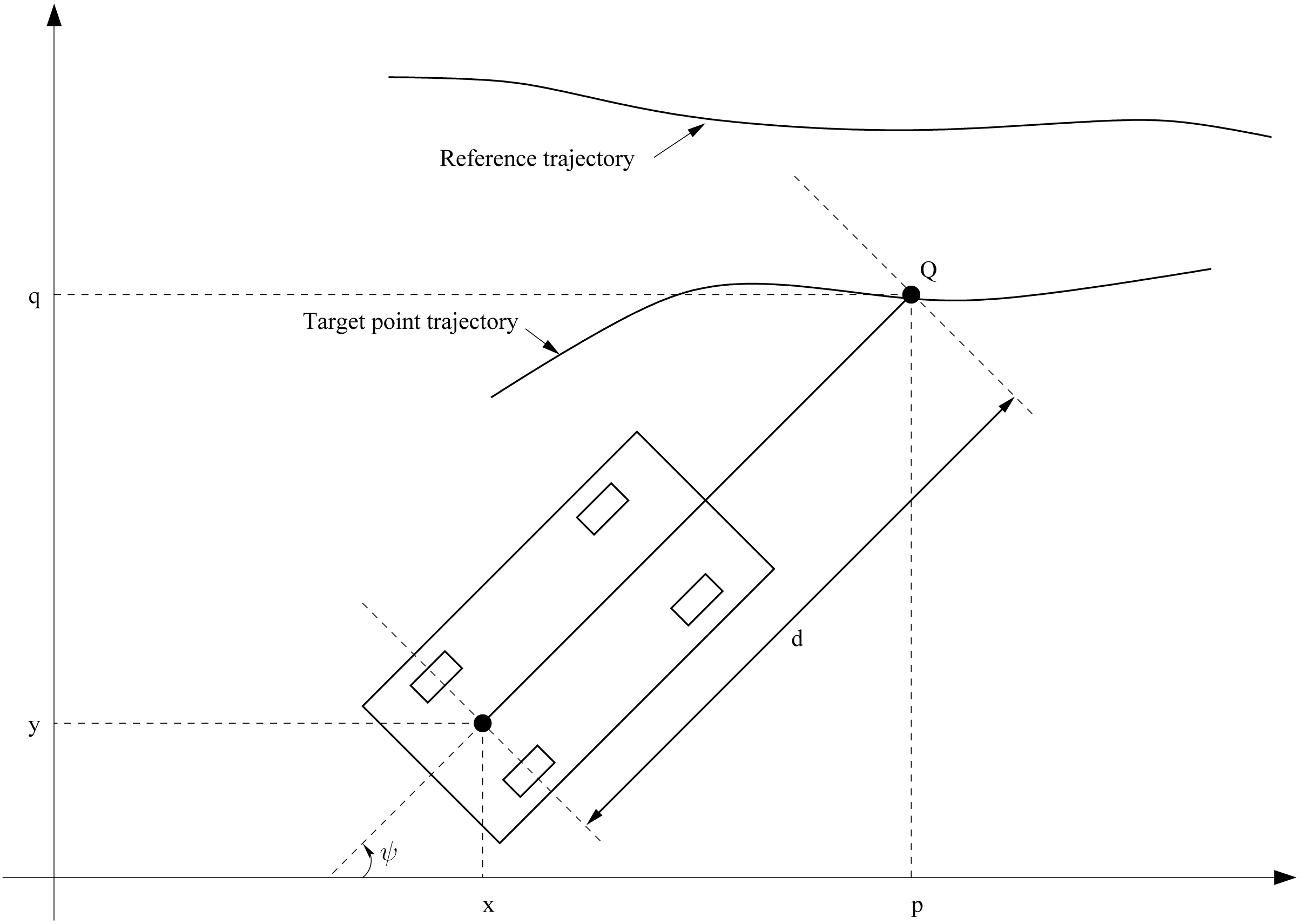}
\smallskip
\end{center}
  \caption{The reference trajectory, the vehicle and its target point.}
\end{figure}

The curve defined by the target point is traveled at the following speed
\begin{equation}\label{eq:vd}
v_d= \sqrt{\dot p^2+\dot q^2}=V_x \sqrt{1+(\kappa d)^2}.
\end{equation}

Our objective now is to define the dynamics of the target point as those of a car. For that purpose, we consider $\theta$ as the angle between the abscissa axis and the velocity vector $(\dot p,\dot q)^T$. One easily gets that
\begin{equation}\label{theta}
\theta =\psi +\arctan(\kappa d),
\end{equation}
then $\dot p= v_d\cos(\theta),\ \ \ \dot q=v_d\sin(\theta)$,
and the scalar curvature $\omega$ is defined by
${\displaystyle\omega=\frac{\dot{\theta}}{v_d}}$.

We define the dynamics of $\omega$ by the new control variable $\rho:=\dot{\omega} / v_d\ $. Deriving equation~\eqref{theta}, we obtain
\begin{equation}\label{omega}
d\dot\kappa=V_x(1+(\kappa d)^2)((1+(\kappa d)^2)^{1/2}\omega-\kappa).
\end{equation}

 Hence the dynamics of the target point $(p,q)$ becomes
\begin{eqnarray}\label{toto}
\left \{ \begin{array}{rcl}
\dot{p}&=&v_d\ \cos{\theta},\\
\dot{q}&=&v_d\ \sin{\theta},\\
\dot{\theta}&=&v_d\  \omega,\\
\dot{\omega}&=&v_d\  \rho.
\end{array} \right.
\end{eqnarray}
The error between the target point and the reference curve is defined as
\begin{eqnarray}
\begin{array}{rcl}
e_p&=&p-p_r,\\
e_q&=&q-q_r,\\
\xi&=&\theta - \psi_r,\\
\eta&=&\omega - \kappa_r.
\end{array}
\end{eqnarray}
and the error dynamics is given by
\begin{eqnarray}\label{erreur}
\left \{ \begin{array}{rcl}
\dot{e}_p&=&v_d\ \cos{\theta} - u\cos{\psi_r},\\
\dot{e}_q&=&v_d\ \sin{\theta} - u\sin{\psi_r},\\
\dot{\xi}&=&v_d\  \omega - \kappa_r\ u,\\
\dot{\eta}&=&v_d\  \rho - \rho_r\ u.
\end{array} \right.
\end{eqnarray}

\section{Control design and Lyapunov Function}
In this section, we will present a control law $u(e_p,e_q,\xi,\eta,t)$ and $\rho(e_p,e_q,\xi,\eta,t)$, such that the system (\ref{erreur}) is globally asymptotically stable (GAS for short) w.r.t. origin. Note that, from the equations \eqref{eq:vd} and \eqref{omega}, one recovers the control $\rho_0$ once $v_d$ and $\rho$ are determined. However, there is an issue of possible blow-up in finite time for $\kappa$ (and thus for $\rho_0$). Indeed, assuming that one is able to stabilize \eqref{erreur} to zero, then the control $\rho_0$ is obtained by derivating $\kappa$, which is in turn obtained by solving \eqref{omega}, seen as an o.d.e. with unknown $\kappa$ since $\omega$ tends to $\kappa_r$ asymptotically. Equation \eqref{omega} is of the type $\dot\kappa=f(\kappa,t)$ with the right-hand side $f$ not globally Lipschitz w.r.t. $\kappa$, hence it is not immediate to insure global existence of $\kappa$ for all $t\geq 0$. We will show later on, that an appropriate choice of $u$ and $\rho$ under Hypothesis {\textbf (H1)} solves this problem (see Lemma~\ref{explosion} below).

The standard saturation function $\sigma(x)$ defined for $x\in\mathbb{R}$ by
\begin{equation}\label{sigma}
\sigma (x) =  \frac{x}{\max(1,\vert x\vert)}.
\end{equation}
Let us first of all perform a variable change on the control, as follows
\begin{eqnarray}\label{com0}
\begin{array}{rcl}
u&=&v_d(1+u_1),\\
\rho&=&\rho_r(1+u_1) + u_2,
\end{array}
\end{eqnarray}
where $u_1,\ u_2$ are the new control variables.

\begin{remark}\label{pec:0} In order to define $\omega$, $\rho$ and to perform the change of inputs variables, $v_d$ must be greater than zero and thus $V_x$ must also be strictly positive. It is therefore not obvious to proceed as above, if $V_x$ only satisfies (PEC).
\end{remark}

With the boundedness of $\kappa$ and $V_x$, equation \eqref{eq:vd} implies that $\nu_d$ is bounded. If one insists on having $\rho$ bounded, then we must assume also that $\rho_r$ is bounded, as
\beqnum
	\left| \rho_r \right| \le \rho_{r,max},
\eeqnum
where $\rho_{r,max}$ is a known positive constant.\\

The system (\ref{erreur}) is therefore rewritten as
\begin{eqnarray}\label{err1}
\left \{ \begin{array}{rcl}
\dot{e}_p&=&v_d(\cos{\theta}- \cos{\psi_r}-\ u_1\cos{\psi_r}),\\
\dot{e}_q&=&v_d(\sin{\theta} - \sin{\psi_r}-\ u_1\sin{\psi_r}),\\
\dot{\xi}&=&v_d(\eta - \kappa_r u_1),\\
\dot{\eta}&=&v_d u_2.
\end{array} \right.
\end{eqnarray}

The bounded controls $u_1$ and $u_2$ can be expressed in the following form:
\begin{eqnarray}
\begin{array}{rcl}
u_1&=&C_1\sigma(\cdot),\\
u_2&=&D \sigma(\cdot),
\end{array}
\end{eqnarray}
with sufficiently small gains $C_1$ and $D$. Since $\kappa$ is bounded, $v_d$ also remains uniformly bounded throughout $t\geq 0$. We can hence change the time scale by considering $ds=v_d\ dt$. To keep the notations simple, we would continue to use $t$ for time, and the point for the derivation with respect to $s$, like $\frac{df}{ds}=\dot f $. This has no effect on the control laws since our design is based on static feedback (w.r.t. the error). The error dynamics hence becomes
\begin{eqnarray}
\left \{ \begin{array}{rcl}
\dot{e}_p&=&\cos{\theta}- \cos{\psi_r}-u_1\cos{\psi_r}, \\
\dot{e}_q&=&\sin{\theta} - \sin{\psi_r}-u_1\sin{\psi_r}, \\
\dot{\xi}&=&\eta - \kappa_r u_1,\\
\dot{\eta}&=&u_2.
\end{array} \right.
\end{eqnarray}

Let us perform the following change of variable corresponding to a time-varying
rotation in the frame of the reference trajectory
\begin{eqnarray}
\begin{array}{rcl}
y_1&=&e_p\ \cos{\psi_r} + e_q\ \sin{\psi_r}, \\
y_2&=&-e_p\ \sin{\psi_r} + e_q\ \cos{\psi_r}.
\end{array}
\end{eqnarray}
The system becomes
\begin{eqnarray}\label{systemu}
\left\{ {\begin{array}{rcl}
\dot{y}_1&=&-u_1 + (\cos{\xi}-1) + (1+u_1) \kappa_r y_2,\\
\dot{y}_2&=&\sin{\xi} - (1+u_1) \kappa_r y_1,\\
\dot{\xi}&=&\eta - \kappa_r u_1,\\
\dot{\eta}&=&u_2,
\end{array} }\right.
\end{eqnarray}
where $u_1$, $u_2$ will be chosen such that ($\ref{systemu}$) becomes GAS.\\ The control variables $u_1$ and $u_2$ are defined as follows
\begin{eqnarray}
\begin{array}{rcl}\label{control0}
   {u_1} &=&  {C_1}\sigma(y_1) ,\\
   {u_2} &=& -D\sigma( \frac{k_1}D\xi  + \frac{k_2}D\eta  + \frac{C_2}D\sigma (y_2)) ,\hfill \\
\end{array}
\end{eqnarray}
where $k_1,k_2,C_1,C_2,D$ are positive real numbers and $\sigma()$ is the standard saturation function defined in ($\ref{sigma}$).  Typically, we want to stabilize the system with arbitrarily small saturation levels $C_1$ and $D$. In conclusion, the final system, noted ($\Sigma$) becomes

\begin{eqnarray}\label{system}
(\Sigma) \quad \left  \{ {\begin{array}{rcl}
\dot{y}_1&=&-{C_1}\sigma  ({{y_1}}) + (\cos{\xi}-1) + \mu y_2,\\
\dot{y}_2&=&\sin{\xi} - \mu y_1,\\
\dot{\xi}&=&\eta - \kappa_r {C_1}\sigma ({{y_1}}),\\
\dot{\eta}&=&-D\sigma( \frac{k_1}D\xi  + \frac{k_2}D\eta  + \frac{C_2}D\sigma (y_2)) ,
%- {k_1}\xi  - {k_2}\eta  - {C_2}\sigma (y_2),
\end{array}}\right.
\end{eqnarray}
where \begin{equation}\label{eq:mu0}
\mu:=\kappa_r (1 + {C_1}\sigma  ({{y_1}}) )\hbox{ and }
\vert\mu\vert\leq \kappa_{max}(1+C_1).
\end{equation}
In the following section, it is shown that Global Asymptotic Stability of the system (\ref{system}) can be achieved by proper selection of $C_1$, $C_2$, $k_1$, $k_2$.
%In the following section, we choose $C_1$, $C_2$, $k_1$, $k_2$ to stabilize (\ref{system}).

More precisely, we prove the following theorem, which is the main result of the paper.

\begin{theorem}\label{theo1}
Consider a path $\Gamma$ with geodesic curvature $\kappa_r^\ast$ verifying \eqref{kappa0} for some $\kappa_{max}>0$. It is then possible to track asymptotically $\gamma$ with a point fixed at a distance $d>0$ in front of a vehicle, where $d\kappa_{max}<1$, by choosing the control laws
$u_1,u_2$ according to \eqref{control0} with constants $k_1,k_2,C_1,C_2,D$, which satisfy the following conditions. Set $a:=\frac{3}{16}$.
\begin{equation}\label{eq:param}
k_1 = a{k_2}^2,\ C_2=\frac1{2\beta k_2},\ C_1=\frac{aC_2}{4k_2},
\end{equation}
%\ M=\beta k_2,
where $\beta$ is a positive constant larger than $8$, $D$ is an arbitrary positive constant, fixed a-priori, and $k_2$ is large enough that $\frac1{k_2D} \ll 1$.

\end{theorem}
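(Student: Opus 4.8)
The plan is to exhibit a positive-definite function $V$ on the state space of $(\Sigma)$ whose derivative along trajectories is strictly negative outside a small neighborhood of the origin, and then upgrade this ``practical stability'' to genuine global asymptotic stability by a bootstrap argument, meanwhile checking that the reconstructed physical control $\rho_0$ is well-defined for all time (Lemma~\ref{explosion}). The key structural observation is that $(\Sigma)$ splits into a ``$(\xi,\eta)$-subsystem'' driven by the bounded perturbation $\kappa_r C_1\sigma(y_1)$, and a ``$(y_1,y_2)$-subsystem'' whose cross terms $\mu y_2$, $-\mu y_1$ are a rotation (energy-preserving for $y_1^2+y_2^2$) plus the small drift terms $\cos\xi-1$ and $\sin\xi$. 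So one should first analyse the linear part of the $(\xi,\eta)$-dynamics, namely $\dot\xi=\eta$, $\dot\eta=-k_1\xi-k_2\eta$ (the regime where all saturations are inactive), for which $k_1=ak_2^2$ with $a=3/16$ makes the characteristic roots real and well-separated; a Lyapunov function of the form $P_{11}\xi^2+2P_{12}\xi\eta+P_{22}\eta^2$ with $P$ solving the associated algebraic Riccati/Lyapunov equation gives a clean decay estimate, and the asymptotic analysis of this Riccati equation in the large-$k_2$ limit is what fixes the scalings $C_2=\frac1{2\beta k_2}$ and $C_1=\frac{aC_2}{4k_2}$.

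Concretely, I would build $V=V_1(\xi,\eta)+\lambda V_2(y_1,y_2)$ (or a nested/weighted sum in the spirit of forwarding/backstepping with bounded controls), where $V_1$ is the Riccati quadratic form above, modified for the saturated regime, and $V_2$ is essentially $\tfrac12(y_1^2+y_2^2)$ plus a correction handling the $y_2$ term inside the second saturation. Differentiating $V$ along $(\Sigma)$: the rotation terms $\mu(y_1\dot{}\,y_2-\cdots)$ cancel in $\dot V_2$; the term $-C_1\sigma(y_1)y_1\le -C_1|y_1|/\max(1,|y_1|)$ gives dissipation in $y_1$; the drift $(\cos\xi-1,\sin\xi)$ is $O(|\xi|)$ near the origin and bounded globally, hence absorbable by the $V_1$-dissipation once $k_2$ is large; and in the $(\xi,\eta)$-block one uses that for $\beta>8$ the argument of the outer saturation in $u_2$ stays in the linear regime on the relevant sublevel set, so $u_2\approx -(k_1\xi+k_2\eta+C_2\sigma(y_2))$ and $\dot V_1$ inherits the negative-definiteness of the linear analysis, with the $C_2\sigma(y_2)$ coupling controlled by the smallness $\frac1{k_2 D}\ll1$ and Young's inequality against the $y_2$-dissipation. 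The upshot is $\dot V\le -c\,W(\text{state})$ for a suitable positive-definite $W$, valid outside a ball of radius $O(1/k_2)$ or so.

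The final two ingredients are the bootstrap and the blow-up lemma. For the bootstrap: having shown trajectories enter and remain in a neighborhood $\mathcal N_1$ of the origin, one observes that inside $\mathcal N_1$ all saturations are inactive, so $(\Sigma)$ is governed by its (cascade) linearization with small higher-order terms; re-running the Lyapunov estimate on this smaller regime — now with the full strength of the linear decay and no saturation losses — shows the state is driven into a still smaller neighborhood $\mathcal N_2$, and iterating (a standard argument, cf.~\cite{bootstrap}) yields convergence to the origin, hence GAS since every trajectory is globally bounded by the sublevel-set argument. For Lemma~\ref{explosion}: once $\eta=\omega-\kappa_r\to0$, equation~\eqref{omega} read as an ODE in $\kappa$ has right-hand side $\frac{V_x}{d}(1+(\kappa d)^2)\big((1+(\kappa d)^2)^{1/2}\omega-\kappa\big)$; under $(H1)$ $d\kappa_{\max}<1$ one checks the sign of this field at $\kappa=\pm1/d$ pushes $\kappa$ inward, giving an a priori bound $|\kappa|<1/d$ and thus global existence and boundedness of $\rho_0$.

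The main obstacle I anticipate is the bookkeeping of the small parameters: one must choose $C_1,C_2,k_1,k_2$ so that simultaneously (i) the outer saturation in $u_2$ operates in its linear regime on the invariant sublevel set, (ii) the quadratic cross-terms coming from $\mu y_2$, the $\xi$-drift, and the $C_2\sigma(y_2)$-coupling are each dominated by a genuine dissipation term, and (iii) the Riccati form $P=P(k_2)$ stays uniformly positive-definite and suitably bounded as $k_2\to\infty$. This is precisely the ``keeping track of the small amplitudes of the saturations'' difficulty the introduction flags as the step beyond \cite{ait}; the design $k_1=ak_2^2$, $C_2\sim1/k_2$, $C_1\sim1/k_2^2$, $\beta>8$, $k_2D\gg1$ is exactly tuned to make all three hold at once, and verifying that tuning is the technical heart of the proof.
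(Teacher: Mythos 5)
Your high-level plan matches the paper's: design a positive-definite $V$ whose derivative is negative outside a small region $Y_{k_2}$, show trajectories must enter that region, then shrink it iteratively (bootstrap/small-gain), and separately argue via (H1) that the ODE \eqref{omega} for $\kappa$ cannot blow up. You also correctly identify that the asymptotic analysis of the Riccati equation in the large-$k_2$ limit is what dictates the scalings $C_2\sim 1/k_2$, $C_1\sim 1/k_2^2$. However, there is a genuine gap at the heart of the Lyapunov construction.

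You propose a \emph{separable} function $V=V_1(\xi,\eta)+\lambda V_2(y_1,y_2)$ with $V_2\approx\frac12(y_1^2+y_2^2)$, and you assert that the drift from $(\cos\xi-1,\sin\xi)$ is ``bounded globally, hence absorbable by the $V_1$-dissipation.'' That is false: the contribution of the drift to $\dot V_2$ is $y_1(\cos\xi-1)+y_2\sin\xi$, which is proportional to $|y_1|,|y_2|$ and hence unbounded on the state space; a bounded $V_1$-dissipation in $(\xi,\eta)$ alone cannot dominate it. Worse, after the rotation terms cancel, $\dot V_2$ contains \emph{no} dissipation in $y_2$ at all — only $-C_1 y_1\sigma(y_1)$. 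A separable $V$ therefore cannot close the estimate. The paper's $V$ in \eqref{Lyapunov} fixes both problems with explicit cross terms $\eta y_2+k_2 y_2\xi$: differentiating $\eta y_2$ pulls the $-C_2\sigma(y_2)$ part of $u_2$ against $y_2$ and produces the missing $-C_2\,y_2\sigma(y_2)$ dissipation, while differentiating $k_2 y_2\xi$ produces $-k_1\xi y_2$ which combines with $k_1 y_2\sin\xi$ from $\dot V_2$ into the harmless cubic $k_1 y_2(\sin\xi-\xi)$. Your parenthetical ``plus a correction handling the $y_2$ term inside the second saturation'' points vaguely in this direction, but the cross-term mechanism is the load-bearing idea and needs to be stated explicitly; without it the proof does not go through.

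Two further, smaller inaccuracies. First, your bootstrap is framed as ``inside $\mathcal N_1$ all saturations are inactive, re-run the Lyapunov estimate with full linear decay.'' The actual obstruction inside $Y_{k_2}$ is not saturation loss but the cross term $k_2|y_1\xi|$, which the Lyapunov estimate cannot dominate there. The paper's bootstrap instead uses the ISS bounds for $(\xi,\eta)$ driven by $(y_1,y_2)$ (Lemma~\ref{le:ISSnew}): a bound on $(y_1,y_2)$ improves the bound on $\xi$, which shrinks the bad region $Y_{k_2}$ via \eqref{eq:11}--\eqref{eq:22}, which in turn tightens $(y_1,y_2)$, and so on — a small-gain iteration rather than a re-run of the same estimate. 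Second, for the blow-up lemma your claimed a priori bound $|\kappa|<1/d$ is not what (H1) gives: the paper obtains an invariant region $|\kappa|<K$ with $K$ depending on $d\|\eta\|_\infty$ and $d\kappa_{max}$, by showing the right-hand side of \eqref{eq:kap2} becomes negative for $|\kappa|\geq K$ once $d\|\eta\|_\infty<1-d\kappa_{max}$. For $d\kappa_{max}$ close to $1$ the vector field at $\kappa=\pm 1/d$ need not point inward, so your specific threshold is wrong even though the overall mechanism (sign control on a large-$|\kappa|$ boundary) is the right one.
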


%%%%%%%%%%%%%%%%%%%%%%%%%%%%%%%%%%%%%%%%%%%%
%%%%%%%%%%%%%%%%%%%%%%%%%%%%%%%%%%%%%%%%%%%%

\begin{proof}
The proof of GAS stability of System (\ref{system}) has been carried out as an argument based on Lyapunov analysis.\\

The first remark consists in focusing on the last two equations in $(\Sigma)$
and we will first treat the case where there is no saturation on $\dot\eta$.

In that case, the last two equations in the previous section define a double integrator system, which shall now be denoted as ($S_k$):
\begin{eqnarray}\label{integrators}
\begin{array}{rcl}
(S_k) \quad \left \{ \begin{gathered}
  \dot \xi  = \eta  + {v_1} ,\hfill \\
  \dot \eta  =  - {k_1}\xi  - {k_2}\eta  + {v_2} ,\hfill \\
\end{gathered}  \right.
\end{array}
\end{eqnarray}
with,
\begin{eqnarray}\label{eq:nu00}
\begin{array}{rcl}
\begin{gathered}
{v_1} =  - {\kappa _r}{C_1}\sigma \left( {{y_1}} \right), \hfill \\
{v_2} =  - {C_2}\sigma ({y_2}).\hfill \\
\end{gathered}
\end{array}
\end{eqnarray}
%Before addressing the stabilization of ($\ref{system}$), we first remark that, the final two equations define a double integrator system, noted ($S_k$), presented in the following equations

%
%\begin{eqnarray}\label{integrators}
%\begin{array}{rcl}
%(S_k) \quad \left \{ \begin{gathered}
%  \dot \xi  = \eta  + {v_1} ,\hfill \\
%  \dot \eta  =  - {k_1}\xi  - {k_2}\eta  + {v_2} .\hfill \\
%\end{gathered}  \right.
%\end{array}
%\end{eqnarray}
%with,
%\begin{eqnarray}\label{eq:nu00}
%\begin{array}{rcl}
%\begin{gathered}
%{v_1} =  - {\kappa _r}{C_1}\sigma \left( {{y_1}} \right), \hfill \\
%{v_2} =  - {C_2}\sigma ({y_2}).\hfill \\
%\end{gathered}
%\end{array}
%\end{eqnarray}

The system $(S_k)$ can be presented in the matrix form
\begin{eqnarray}\label{matrix_form}
\begin{array}{rcl}
   \dot Z = AZ + BU ,
\end{array}
\end{eqnarray}
where,
\begin{eqnarray}
\begin{array}{l}
 Z = \left( \begin{array}{l}
 \xi  \\
 \eta  \\
 \end{array} \right), \
 A = \left( {\begin{array}{*{20}{c}}
   0 & 1  \\
   { - {k_1}} & { - {k_2}}  \\
\end{array}} \right), \
 B = \left( {\begin{array}{*{20}{c}}
   1 & 0  \\
   0 & 1  \\
\end{array}} \right), \
 U = \left( \begin{array}{l}
 {\nu _1} \\
 {\nu _2} \\
 \end{array} \right). \\
 \end{array}
 \end{eqnarray}
Since $A$ is Hurwitz, there exists a quadratic form $V_k = Z^T P_k Z $, where $P_k$ is a positive definite square matrix, obtained by solving the following Riccati equation
\begin{eqnarray}\label{eq:ricatti}
\begin{array}{rcl}
{P_k}A + {A^T}{P_k}+\frac{P_k^2}{\Upsilon _L^2} =  - I,
\end{array}
\end{eqnarray}
where, $\Upsilon _L$ is the $L_2$-gain related to the system $(S_k)$.
The derivative $\dot V_k$ is given by the following equation\\
\begin{eqnarray}\label{eq:V}
\begin{array}{rcl}
{{\dot V}_k} =  - {\left\| Z \right\|^2} - \frac{{{{\left\| {P_k} Z \right\|}^2}}}{{\Upsilon _L^2}} + 2{Z^T}{P_k}U,
\end{array}
\end{eqnarray}
and verifies
\begin{eqnarray}
\begin{array}{rcl}
{{\dot V}_k} \leq  - {\left\| Z \right\|^2} +\Upsilon _L^2{\left\| U \right\|^2}.
\end{array}
\end{eqnarray}
The Lyapunov function proposed for the global system (\ref{system}) is
\begin{eqnarray}\label{Lyapunov}
\begin{array}{rcl}
  V&=&M{V_k} + {k_1}\frac{{ y_1^2 + y_2^2 }}{2} + \eta {y_2} + {k_2}{y_2}\xi, \\
\end{array}
\end{eqnarray}
where $M,\ k_1,\ k_2$ are positive constants to be chosen later in particular to ensure
that V is positive definite function, see Lemma \ref{le:V} below. Moreover, a straightforward computation yields the following:
\begin{proposition}
The derivative of the Lyapunov function can be upper bounded as follows,
\begin{eqnarray}
\begin{array}{rcl}
\begin{gathered}
\begin{array}{rcl}
 \dot V  &\leq&  - M({\xi^2 + \eta^2}) - {k_1}{C_1}{y_1}\sigma \left( {{y_1}} \right) - {C_2 y_2}\sigma \left( {{y_2}} \right) + M{\Upsilon _L}^2\left( {{{\left( {{\kappa _r} C_1 \sigma \left( {{y_1}} \right)} \right)}^2} + ({C_2 \sigma \left( {{y_2}} \right)}) ^ 2} \right) \\
  &&+k_1y_2(\sin\xi-\xi)+k_1y_1(\cos\xi-1) - \mu \eta {y_1} + \eta \sin\xi  - {k_2}{y_2}{\kappa _r}{C_1}\sigma \left( {{y_1}} \right)
  - {k_2}\xi \mu {y_1} + {k_2}\xi\sin\xi. \label{lyap1}\\
 \end{array}
\end{gathered}
\end{array}
\end{eqnarray}
\end{proposition}

The rest of the argument is divided in two main steps. In the first step, the existence of appropriate constants $M,k_1,k_2,C_1,C_2$ is proven, such that $V$ has a positive definite quadratic form in all the variables. This means that there exists a bounded region $Y_{k_2}$ (for $k_2$ typically large), in the $(y_1,y_2)-$plane:

 \begin{equation}\label{eq:Yk}
 Y_{k_2}=\{(y_1,y_2)\vert \quad \vert y_1\vert\leq \frac{C}{k_2^2},\ \vert y_2\vert\leq \frac{C}{k_2^{3/2}}\},
 \end{equation}

such that outside this region, the derivative of $V$ along trajectories of \eqref{system} fulfills the following inequality

\begin{equation}\label{final}
 \dot V  \leq  - \frac{M}2({\xi^2 + \eta^2}) - {k_1}\frac{C_1}2{y_1}\sigma \left( {{y_1}} \right) - \frac{C_2}2{y_2}\sigma \left( {{y_2}} \right).
\end{equation}

In the second step, a bootstrap-type argument is applied to show the convergence of trajectories of \eqref{system} to zero, as $t$ tends to infinity.\\

These two steps have been achieved in the following manner: the $L_2$-gain of $(S_k)$, denoted by $\Upsilon_L$ is calculated, then $P_k$ is estimated for $k_2$ tending to infinity. Then ISS (input-to-state) type bounds are calculated for $\xi$ and $\eta$ and the derivative of the Lyapunov function is estimated outside $Y_{k_2}$, and the argument is concluded. The detailed calculations have been presented in the following subsections.

%%%%%%%%%%%%%%%%%%%%%%%%%%%%%

\subsection{$L_2$-gain  $\Upsilon_L$}
Let us study the system ($S_k$), defined in the equation (\ref{integrators}).
We recall that, ($S_k$) can be presented in the following matrix form

\begin{eqnarray}\label{matrix_form_ex}
\begin{array}{rcl}
\underbrace {\left( {\begin{array}{*{20}{c}}
   {\dot \xi }  \\
   {\dot \eta }  \\
\end{array}} \right)}_{\dot Z} = \underbrace {\left( {\begin{array}{*{20}{c}}
   0 & 1  \\
   { - {k_1}} & { - {k_2}}  \\
\end{array}} \right)}_A\underbrace {\left( {\begin{array}{*{20}{c}}
   \xi   \\
   \eta   \\
\end{array}} \right)}_Z + \underbrace {\left( {\begin{array}{*{20}{c}}
   1 & 0  \\
   0 & 1  \\
\end{array}} \right)}_B\underbrace {\left( {\begin{array}{*{20}{c}}
   {{v_1}}  \\
   {{v_2}}  \\
\end{array}} \right) . }_U
\end{array}
\end{eqnarray}

\begin{lemma}\label{le:l2}
We will tune $k_2 \geq 20$ with $k_1=\frac{3}{{16}} k_2^2$, then $1 < \Upsilon_L < 1.2$.
\end{lemma}
The proof of Lemma \ref{le:l2} is given in Appendix.

%%%%%%%%%%%%%%%%%%%%%%%%

\subsection{Estimation of $P_k$ for large $k_2$}
In this section, we take ${k_1} = ak_2^2$ with $a=\frac{3}{{16}}$ and $k_2$ tending to infinity.
We will prove the following two results whose proofs are given in Appendix.

\begin{lemma}\label{le:P_k}
As $k_2$ tends to infinity, the positive define matrix $P_k$ defined in \eqref{eq:ricatti} admits the following asymptotic expansion
\begin{equation}\label{eq:P}
P_k=\begin{pmatrix}p_1k_2&p_2\\p_2&\frac{p_3}{k_2}\end{pmatrix},
\end{equation}
with $p_i=F_i(1+O(\frac1{k_2^2}))$, $1\leq i\leq 3$, where the $F_i$ are positive constants
(only depending on $a$) so that $F_1F_3-F_2^2>0$.
\end{lemma}

\begin{proposition}\label{le:V}
For $k_2$ large enough and $M = \beta k_2$, the function $V$ defined in \eqref{Lyapunov} is a positive quadratic form in $(\xi,\eta,y_1,y_2)$.
\end{proposition}
%%%%%%%%%%%%%%%%%%
\subsection{ISS bounds for $\xi$ and $\eta$ }
For a real-valued continuous and bounded function $f$ defined on $\mathbb{R}^+$,
we set
$$
\vert f\vert^{*}(t):=\sup_{s\geq t}\vert f(s)\vert,
$$
and
$$
\Vert f\Vert^*:=\limsup_{s\rightarrow\infty}\vert f(s)\vert.
$$
\begin{lemma}\label{l_sup_limite_non_satur}
Consider the system (\ref{matrix_form_ex}). By tuning ${k_1} = \frac{3}{{16}}k_2^2$ ,
the ISS bounds of $\xi$ and $\eta$ satisfy the following inequalities for $t\geq 0$,
\begin{eqnarray}\label{ISS_limite}
\begin{array}{rcl}
\begin{gathered}
\left\{ \begin{array}{l}
 {\begin{array}{*{20}{c}}
\vert \xi\vert^{*}(t)\\
\end{array}}  \le \frac{4}{{{k_2}}}\left( { \kappa_{max} {C_1} + \frac{4}{{3{k_2}}}{C_2}} \right) +
\| e^{At}Z_0\|, \\
 {\begin{array}{*{20}{c}}
  \vert \eta\vert^{*}(t)  \\
\end{array}}  \le  { \kappa_{max} {C_1} + \frac{{16}}{{3{k_2}}}{C_2}} +\| e^{At}Z_0\|, \\
\end{array} \right.\label{toto1}
\end{gathered}
\end{array}
\end{eqnarray}
where $Z_0$ is the initial condition.

As a consequence, we have, for $t\geq 0$ large enough,

\begin{eqnarray}\label{ISS_limite1}
\begin{array}{rcl}
\begin{gathered}
\left\{ \begin{array}{l}
 {\begin{array}{*{20}{c}}
\vert \xi\vert^{*}(t)\\
\end{array}}  \le \frac{8}{{{k_2}}}\left( { \kappa_{max} {C_1} + \frac{4}{{3{k_2}}}{C_2}} \right), \\
 {\begin{array}{*{20}{c}}
  \vert \eta\vert^{*}(t)  \\
\end{array}}  \le  {{2 \kappa_{max} C_1} + \frac{{32}}{{3{k_2}}}{C_2}}. \\
\end{array} \right.\label{toto3}
\end{gathered}
\end{array}
\end{eqnarray}

\end{lemma}

The proof of the above lemma is given in Appendix.

%\begin{remark}
%It is worth noticing that in the above estimates, we chose to have a ''large'' $L_2$-gain but small ISS bounds for $\xi$ and $\eta$. Indeed, in the range $k_2>>k_1\geq 1$, one would have obtained $ \Upsilon_L\leq \frac{k_1}{k_2}$ but $\frac1{\left|\lambda _ + \right|}\geq \frac{k_2}{k_1}$, and the latter estimate would have been a problem for bounding appropriately $\dot V$.
%\end{remark}

From the argument of Lemma \ref{l_sup_limite_non_satur}, other ISS bounds for $\xi$ and $\eta$ can simply be derived by considering the system $(S_k)$ defined in  \eqref{integrators} with the controls $\nu_1$ and $\nu_2$ given in \eqref{eq:nu00}.

\begin{lemma}\label{le:ISSnew}
Let $(S_k)$ be the system defined in  \eqref{integrators} with the controls $\nu_1$ and $\nu_2$ given in \eqref{eq:nu00}. Assume that
$$
\Vert y_1\Vert^{*}< \kappa_{max} , \quad \Vert y_2\Vert^{*}<1.
$$
Then the bounds \eqref{ISS_limite1} can be improved as follows: there exists $T>0$ such that, for every $t>T$,

\begin{eqnarray}\label{ISS_limite2}
\begin{array}{rcl}
\begin{gathered}
\left\{ \begin{array}{l}
 {\begin{array}{*{20}{c}}
\vert \xi\vert^{*}(t)\\
\end{array}}  \le \frac{8}{{{k_2}}}\left( {{C_1\vert y_1\vert^{*}(t)} + \frac{4}{{3{k_2}}}{C_2\vert y_2\vert^{*}(t)}} \right), \\
 {\begin{array}{*{20}{c}}
  \vert \eta\vert^{*}(t)  \\
\end{array}}  \le  {{2C_1\vert y_1\vert^{*}(t)} + \frac{{32}}{{3{k_2}}}{C_2\vert y_2\vert^{*}(t)}}. \\
\end{array} \right.\label{toto4}
\end{gathered}
\end{array}
\end{eqnarray}
\end{lemma}
\begin{proof}
The argument is straightforward by replacing  $C_1$ and $C_2$, which were used to bound
$\nu_1$ and $\nu_2$ in \eqref{eq:mu} by $ \kappa_{max} C_1\vert y_1\vert^{*}(t) $ and $C_2\vert y_2\vert^{*}(t) $.
\end{proof}

For the rest of the paper, we choose $C_1,C_2<<1$ so that the limsup of both $\xi$ and $\eta$ are very small. In the subsequent computations, we can assume with no loss of generality that $|\cos\xi-1|\leq \frac{\xi^2}2$, $|\sin\xi|\leq |\xi|$ and $|\sin\xi-\xi|\leq \frac{|\xi|^3}3$.

\begin{proposition}
The following inequality holds true
\begin{eqnarray}
\begin{array}{rcl}
\begin{gathered}
\begin{array}{rcl}
 \dot V  &\leq&  - M({\xi^2 + \eta^2}) - {k_1}{C_1}{y_1}\sigma \left( {{y_1}} \right) - {C_2 y_2}\sigma \left( {{y_2}} \right) + M{\Upsilon _L}^2\left( {{{\left( {{\kappa _r} C_1 \sigma \left( {{y_1}} \right)} \right)}^2} + ({C_2 \sigma \left( {{y_2}} \right)}) ^ 2} \right) \\
  &&+\frac{k_1|y_2||\xi|^3}3+\frac{k_1|y_1|\xi^2}2 + \mu \eta {y_1} + |\eta\xi|  - {k_2}{y_2}{\kappa _r}{C_1}\sigma \left( {{y_1}} \right)
  - {k_2}\xi \mu {y_1} + {k_2}\xi^2. \label{lyap2}\\
 \end{array}
\end{gathered}
\end{array}
\end{eqnarray}
\end{proposition}

%%%%%%%%%%%%%%%%%%%%%%%%%%%%%%%%%%%%%%%%%%%%%%%%%%%%%%%%%%%%%%%%%%%%%%%%%%%%%%%%%%%%%%%%%%%%%%%%%%%%%%%%%%%%%%%%

%%%%%%%%%%%%%%%%%%%%%%%%%%%%%%%%%%%%%%%%%%%%%%%%%%%%%%%%%%%%%%%%%%%%%%%%%%%%%%%%%%%%%%%%%%%%%%%%%%%%%%%%%%%%%%%%%
\subsection{Estimation of $\dot V$ for $y\notin Y_{k_2}$}
In this subsection, we choose the several parameters so that
$\dot V$ verifies \eqref{final} outside the region $Y_{k_2}$, for $k_2$ large enough.
The results are summarized in the next lemma.
\begin{lemma}\label{le:11}
For the choice of parameters defined in Theorem \ref{theo1}, there exists $k_2$ large enough and $T>0$, such that, for every $t>T$, Eq. \eqref{final} is verified.
\end{lemma}
The proof of Lemma \ref{le:11} is given in Appendix.\\
In the rest of the paper, the symbol $C$ has been used to represent arbitrary constants, depending only on $\beta$.
\begin{remark}
Notice that inside $Y_{k_2}$, the term $k_2\vert y_1\xi\vert$ cannot be controlled since we only have for that purpose the term $\beta k_2\xi^2+Cy_1^2$.
\end{remark}

\subsection{Final step}
Note that outside $Y_{k_2}$,  for $t$ large enough,  $\dot V\leq -\frac{C}{k_2^4}$. To see that, we proceed as before since either $\vert y_1\vert\geq \frac{C}{k_2^2}$ or $\vert y_2\vert\geq \frac{C}{k_2^{3/2}}$. As a consequence, every trajectory of \eqref{system} must reach $Y_{k_2}$ in finite time. Therefore, along every trajectory of \eqref{system},
the value of $V$ is eventually smaller than $V_{max}$, the maximal value of $V$ over the set
$$
\Vert \xi\Vert\leq \frac{C}{k_2^3}, \quad \Vert \eta\Vert\leq \frac{C}{k_2^2}, \quad
\vert y_1\vert\leq \frac{C}{k_2^2}, \quad \vert y_2\vert\leq \frac{C}{k_2^{3/2}}.
$$
By using \eqref{eq:Venc} and Lemma\ref{l_sup_limite_non_satur}, we get
$$
V_{max}\leq \frac{C}{k_2}.
$$
We deduce by using again \eqref{eq:Venc} that, along every trajectory of \eqref{system} and for $t$ large enough
$$
y_1^2+y_2^2\leq \frac{C}{k_2^3}.
$$
We can then use the improved ISS bounds for $\xi$ and $\eta$ obtained in Lemma   \ref{le:ISSnew}. In particular, one gets that, for $t$ large enough,
$$
\vert \xi\vert^{*}(t)\leq \frac{C}{k_2^{3+3/2}}, \quad \vert \eta\vert^{*}(t)\leq \frac{C}{k_2^{2+3/2}},
$$
In turn, this new bound for $\xi$ allows one to shrink the bounded region $Y_{k_2}$ outside which $\dot V$ verifies \eqref{final}. Indeed, one has to satisfy either \eqref{eq:11} or \eqref{eq:22}, which leads to either  $\vert y_1\vert\geq \frac{C}{k_2^{2+3/2}}$ or  $\vert y_2\vert\geq \frac{C}{k_2^3}$.

Continuing the procedure described before, we construct inductively four sequences of positives numbers $y_{1,n}$,  $y_{2,n}$, $\xi_n$ and $\eta_n$, $n\geq 0$, of upper bounds of
$\Vert y_1\Vert^{*}$, $\Vert y_2\Vert^{*}$,$\Vert \xi\Vert^{*}$ and $\Vert \eta\Vert^{*}$ respectively, such that the following inequalities are verified
$$
\xi_n\leq C\frac{y_{1,n}+y_{2,n}}{k_2^3}, \quad \eta_n\leq C\frac{y_{1,n}+y_{2,n}}{k_2^2},
$$
which are obtained from \eqref{ISS_limite2}, and
$$
y_{1,n+1}\leq C\xi_n, \quad y_{2,n+1}^2\leq Ck_2^2\xi_ny_{1,n},
$$
which are, according to \eqref{eq:11} and \eqref{eq:22}, the equations needed to define, at the $(n+1)$-th step, the bounded region outside which $\dot V$ verifies \eqref{final}. It is simple to prove that, for all non negative integer $n$, we have
$$
y_{1,n+1}+y_{2,n+1}\leq \frac{C}{\sqrt{k_2}}(y_{1,n}+y_{2,n}).
$$
This immediately yields the convergence to zero of trajectories of \eqref{system}.

\begin{remark}
The bootstrap procedure we have used above is clearly an instance of a small gain theorem.% We are not able to identify appropriate subsystems in \eqref{system} to directly apply a  small gain theorem.
\end{remark}

\end{proof}

\section{Stabilization of the original system}
We have stabilized System \eqref{system} in case there is no saturation on $\dot\eta$. We will now show that for every initial condition, the term inside the outer saturation in  $\dot\eta$
becomes bounded by $1$ for $t$ sufficiently large (i.e. there exists $T>0$ such that, for $t>T$, the thesis holds true). Thus the last two equations of \eqref{system} are given by \eqref{integrators}. To show that, we need an ISS-type of result on the system
\begin{eqnarray}\label{integrators1}
\begin{array}{rcl}
(S_k) \quad \left \{ \begin{gathered}
  \dot \xi  = \eta  +C_1{d_1} ,\hfill \\
  \dot \eta  = -D\sigma( \frac{k_1}D\xi  + \frac{k_2}D\eta  + \frac{C_2}Dd_2),\hfill \\
\end{gathered}  \right.
\end{array}
\end{eqnarray}
where $d_1$ and $d_2$ are amplitude-bounded perturbations which amplitudes are bounded by constants eventually depending on $\kappa_{max}$. We first perform the linear change of variable defined by
$$
X(t)=\frac{k_1}D\xi(\frac{k_2t}{k_1}), \ \ Y(t)=\frac{k_2}D\eta(\frac{k_2t}{k_1}).
$$
A direct computation shows that the dynamics of $(X,Y)$ is given by
\begin{eqnarray}\label{integrators2}
\begin{array}{rcl}
\quad \left \{ \begin{gathered}
  \dot X  = Y  +\frac{k_2C_1}Dd_1 ,\hfill \\
  \dot Y  = -\frac1a\sigma(X+Y  + \frac{C_2}Dd_2).\hfill \\
\end{gathered}  \right.
\end{array}
\end{eqnarray}
Since both $\frac{k_2C_1}D$ and $\frac{C_2}D$ are of the magnitude of $\frac1{k_2D}$, these constants can be chosen arbitrarily small. Then, as a consequence of Theorems $2.5$ and $2.6$ in \cite{COCV2001}, one gets that there exists $C(a)>0$ a positive constant only depending on $a$ so that
\begin{equation}\label{est-Y}
\limsup_{t\rightarrow\infty}(\vert X(t)\vert +\vert Y(t))\vert \leq\frac{C(a)}{k_2D}
(\Vert d_1\Vert_{\infty}+\Vert d_2\Vert_{\infty}).
\end{equation}
Therefore, $\vert X(t)+Y(t)  + \frac{C_2}Dd_2(t)\vert$ becomes strictly less than one for $t$ large enough if $\frac1{k_2D}$ is small enough.

The following lemma  provides bounding conditions on  $u_1$ and $u_2$ that would guarantee that the differential equation given in \eqref{omega} is defined for all times $t\geq 0$.

\begin{lemma}\label{explosion}
For $k_2D$ large enough, the differential equation in $\kappa$ given by \eqref{omega} is defined for all times $t\geq 0$.
\end{lemma}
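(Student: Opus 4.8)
The plan is to read \eqref{omega} as a scalar non‑autonomous ODE $\dot\kappa=g(\kappa,t)$ with
\[
g(\kappa,t)=\frac{V_x(t)}{d}\,\bigl(1+(\kappa d)^2\bigr)\Bigl(\bigl(1+(\kappa d)^2\bigr)^{1/2}\,\omega(t)-\kappa\Bigr),\qquad \omega=\kappa_r+\eta,
\]
where $\omega$ is now a \emph{known} bounded function of time, since the convergence of $(\Sigma)$ has already been established and $\eta$ is bounded (and tends to $0$). Because $g$ is $C^1$ in $\kappa$ and continuous in $t$, a unique maximal solution exists on some $[0,t^\ast)$; as $g$ grows cubically in $\kappa$, global existence is \emph{not} automatic and must be obtained from an a priori bound on $|\kappa|$ — that bound is the whole content of the lemma.

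The core is a dissipativity (invariant‑interval) estimate. Using $(1+(\kappa d)^2)^{1/2}\le 1+|\kappa|d$, one checks that, provided $d\,\|\omega\|_{\infty}<1$, one has $\kappa\,g(\kappa,t)<0$ for every $t\ge0$ and every $|\kappa|>R$, where $R:=\|\omega\|_{\infty}\bigl(1-d^2\|\omega\|_{\infty}^2\bigr)^{-1/2}$: indeed $|\kappa|>R$ forces $(1+(\kappa d)^2)^{1/2}\|\omega\|_{\infty}<|\kappa|$, so the factor $(1+(\kappa d)^2)^{1/2}\omega(t)-\kappa$ has the sign of $-\kappa$ while $\tfrac{V_x}{d}(1+(\kappa d)^2)>0$. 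Hence the interval $[-R_0,R_0]$ with $R_0:=\max\bigl(R,|\kappa(0)|\bigr)$ is positively invariant, $\kappa$ stays bounded, and $t^\ast=\infty$.

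It then remains to secure $d\,\|\omega\|_{\infty}<1$, and this is exactly where (H1) and the hypothesis ``$k_2D$ large'' enter. Since $|\kappa_r|\le\kappa_{max}$ one has $\|\omega\|_{\infty}\le\kappa_{max}+\|\eta\|_{\infty}$, so it suffices that $\|\eta\|_{\infty}<(1-d\kappa_{max})/d$, the right‑hand side being strictly positive precisely because of (H1). To bound $\|\eta\|_{\infty}$ I would invoke Lemma~\ref{l_sup_limite_non_satur} together with \eqref{est-Y}: with the parameter choice \eqref{eq:param} one has $C_1=O(1/k_2^2)$, $C_2=O(1/k_2)$, and \eqref{ISS_limite}--\eqref{ISS_limite1} combined with \eqref{est-Y} (and $\eta(t)=\tfrac{D}{k_2}Y(k_1t/k_2)$) bound $\|\eta\|_{\infty}$ by $\kappa_{max}C_1+\tfrac{16}{3k_2}C_2$ plus a transient term controlled by $\tfrac1{k_2D}$ and the initial data; all of these fall below $(1-d\kappa_{max})/d$ once $k_2D$ is large enough, which closes the argument.

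The delicate point — and the one I expect to be the main obstacle — is this last step: one needs $\|\eta\|_{\infty}$ \emph{uniformly in $t\ge 0$}, including the initial transient in which the outer saturation in $\dot\eta$ may still be active, not merely its $\limsup$ as supplied by the bootstrap. For initial data in a fixed compact set this follows routinely from a careful combination of \eqref{ISS_limite}--\eqref{ISS_limite1} with \eqref{est-Y}; for arbitrarily large initial curvature one must in addition exploit the freedom in initializing the virtual vehicle (so that $\eta(0)$, hence the transient, is small), since otherwise the a priori bound degrades and finite‑time blow‑up of $\kappa$ cannot be excluded. The dissipativity estimate of the second paragraph, by contrast, is entirely elementary.
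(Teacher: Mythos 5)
Your proof is correct and follows essentially the same route as the paper: both establish the a priori bound on $\kappa$ by showing $\kappa\dot\kappa<0$ for $|\kappa|$ large under the condition $d(\kappa_{max}+\Vert\eta\Vert_\infty)<1$ (your $d\Vert\omega\Vert_\infty<1$), and both secure that condition via the ISS estimate \eqref{est-Y} with $k_2D$ large together with the ``WLOG $\xi(0)=\eta(0)=0$'' normalization, which is exactly the initialization freedom you invoke at the end. Your explicit invariant radius $R$ and the emphasis on the initial transient are minor refinements of, not departures from, the paper's argument based on \eqref{eq:kap2} and the implicit threshold $K$.
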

%The proof of this result is deferred in \cite{ait}.
{\bf Proof of Lemma \ref{explosion}}
After multiplying \eqref{omega} by $\kappa$, one can rewrite as follows,
\begin{equation}\label{eq:kap2}
\frac{d\kappa\dot\kappa}{(1+(d\kappa )^2)^{3/2}}=
\vert \kappa\vert V_x\left[\sign(\kappa)d\eta+
(1-\frac{d\vert \kappa\vert}{\sqrt{1+(\kappa d)^2}})+(d\kappa_r\sign(\kappa)-1)\right].
\end{equation}
The right-hand side of the above inequality is majored by
$$
\vert \kappa\vert V_x(d\vert \eta\vert+d\kappa_{max}-1+ \frac1{1+(d\kappa )^2}).
$$
If we can ensure that
\begin{equation}\label{no-explo1}
d\Vert \eta\Vert_{\infty}<1-d\kappa_{max},
\end{equation}
then this will easily imply that $\kappa$ does not blow up in finite time. Indeed, assuming that \eqref{no-explo1} holds, then there exists $K>0$ (only depending on $d\Vert \eta\Vert_{\infty}$ and $d\kappa_{max}$) such that the right-hand side of \eqref{eq:kap2} becomes negative for $\vert \kappa\vert\geq K$. This readily yields that $\vert \kappa\vert$ becomes strictly less than $K$ in finite time and therefore does not blow up in finite time.

We now show that \eqref{no-explo1} holds true with an appropriate choice of the constants
$k_1,k_2$. Without loss of generality we can assume that $\xi(0)=\eta(0)=0$. In that case, one can replace the $\limsup$'s in the left-hand side of \eqref{est-Y} by $\Vert X\Vert_{\infty}+\Vert Y\Vert_{\infty}$. Since $\frac1{k_2D}$ can be chosen arbitrarily small, \eqref{no-explo1} follows.
\ede

\begin{remark}
The proposed method appears to require global localization of the mobile robot and the desired trajectory at every sampling instant with respect to the world frame, which is usually very difficult to obtain in real applications. However, this does not restrict or limit the application of the presented controller in real life. For example, the position coordinates in $(x,y)$ frame of reference can be obtained by a camera and the angle $\psi$ (the direction of vehicle) by a gyroscope. The position of the target point can then be translated into (p,q) frame of reference given in Equation (5). From here, a finite time differentiator can be used to get $\dot p , \dot q$, and later on the angle $\theta$. A simple exponential (even homogeneous finite time) observer can be used to get $\eta$ as in \cite{mazenc_pvtol}.
\end{remark}

\begin{remark}
Since the proof of the convergence is using a Lyapunov function which is strict
outside $Y_{k_2}$, the results of Theorem \ref{theo1} can easily be extended in case where we have only direct access to $(p_r(\cdot),q_r(\cdot))$. Note that the gradient of the Lyapunov function is linear in its argument and thus, if the states are obtained through observers of differentiators, this will require $C_i$, $i=1,2$ in \eqref{final} to be changed with $C_i+f_i(t)$, where $f_i(\cdot)$ is the observation or estimation error. There exist fixed time convergent differentiators such as \cite{moreno2011}, which ensure that the derivative converges in fixed finite time. On the other hand, if the estimation or convergence is asymptotic, it has been shown in \cite{COCV2001, mazenc_pvtol} that if $f_1,f_2$ tend to zero exponentially, then the controller will also converge and the proof will not change.
\end{remark}

\section{Simulations}
The performance of the presented controller can be seen in the simulation results obtained using the following parameters:

$$
d=2\ m,  \ V_x=5\ m.s^{-1}.
$$

$d\kappa_{max}$ has been chosen much smaller than $1$ in order to emphasize upon significant initial conditions (in particular, $\xi(0)$ close to $\pi$) so that the resultant illustrations highlight our claim. The initial conditions imposed upon the error are
$$
e_p(0)=e_q(0)=10\ m,\  \xi(0)=9\pi/10,\
$$

The parameters of the controller was taken as follow:
\beq
	C_1 = 0.1172,\quad C_2 = 0.5, \quad k_1 = 7500, \quad k_2 = 200, \quad D = 50.
\eeq

Figure \ref{tra1} shows the reference trajectory, the target point and vehicle in a 2D coordinate plane. It can be seen that the system converges to the reference trajectory asymptotically. Once the vehicle converges, the target point and the vehicle follow the trajectory very closely. The convergence can also be seen in the error graph shown in figure \ref{erreurs}, where the initial conditions are also visible.

Figures \ref{com_u} and \ref{com_ro} show the control signals $u$ and $\rho$ respectively. It is clear from these figures that the controller does not attain extremely large values, and is bounded. This is an essential property in real systems, which does not result in impossible control signals when the initial error is very large.

\section{Conclusion}
In this paper, we have addressed the problem of path following using a target point rigidly attached to a car type vehicle, by controlling only the orientation of the vehicle by its angular acceleration. The main idea was to determine a control law using saturation which ensures global stabilization in two steps. The proposed Lyapunov function forces the errors to enter a neighborhood of the origin in finite time. The Lyapunov analysis also shows that by a bootstrap procedure, this neighborhood contracts asymptotically to zero. Simulation results illustrate the GAS performance of the controller.

\begin{figure}[htbp]%\hspace{-1.0cm}
 \centering
      \includegraphics[width=10cm]{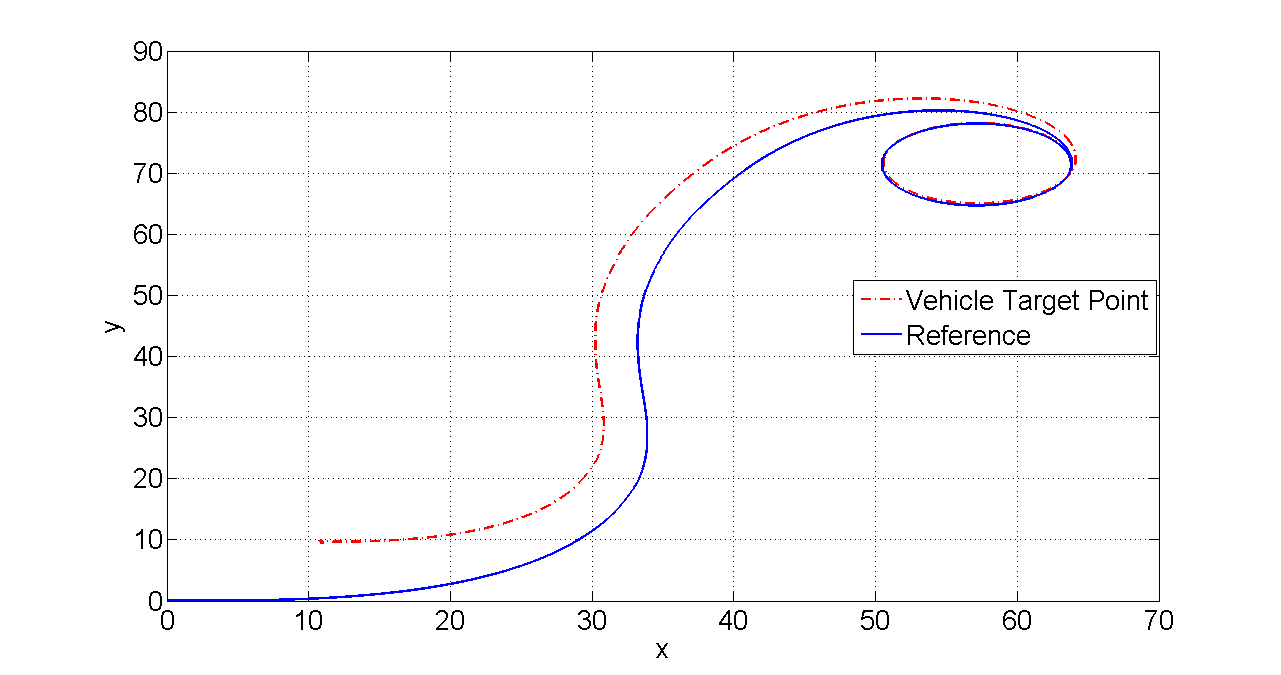}
  \caption{Reference trajectory, of the vehicle and its target point } \label{tra1}
\end{figure}

\begin{figure}[htbp]%\hspace{-1.0cm}
 \centering
      \includegraphics[width=10cm]{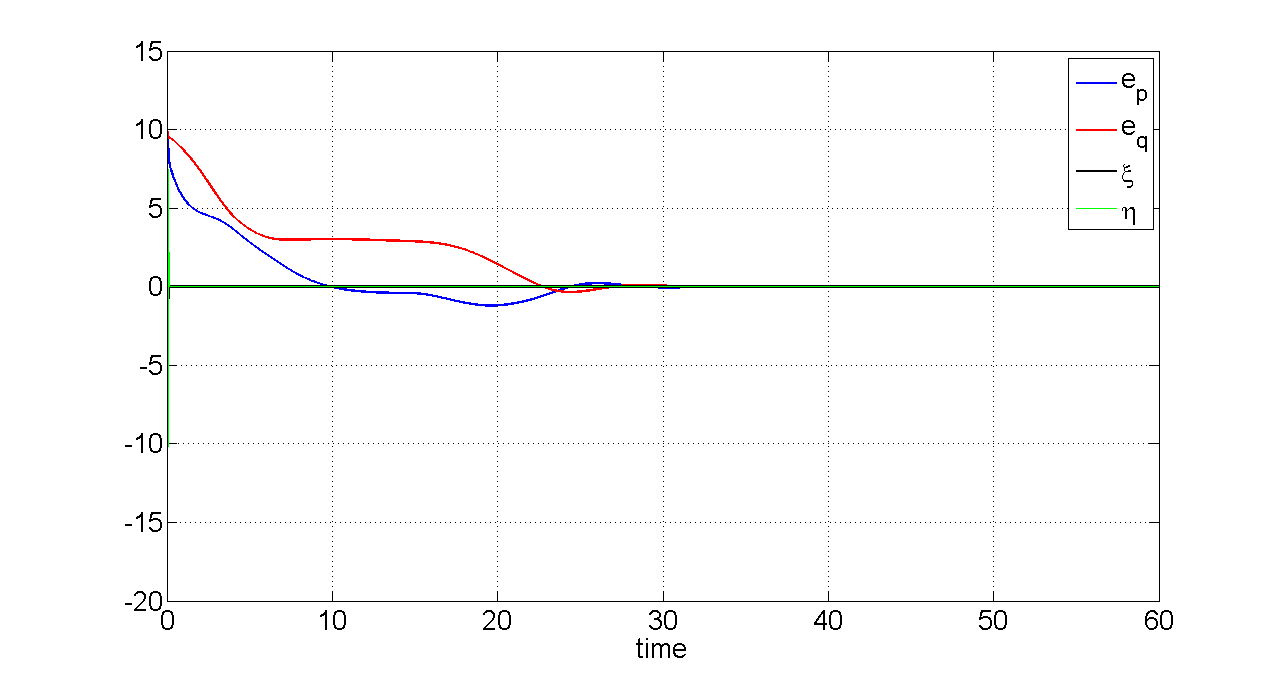}
  \caption{Errors $e_p$, $e_q$, $\xi$ and $\eta$} \label{erreurs}
\end{figure}

\begin{figure}[htbp]%\hspace{-1.0cm}
 \centering
      \includegraphics[width=10cm]{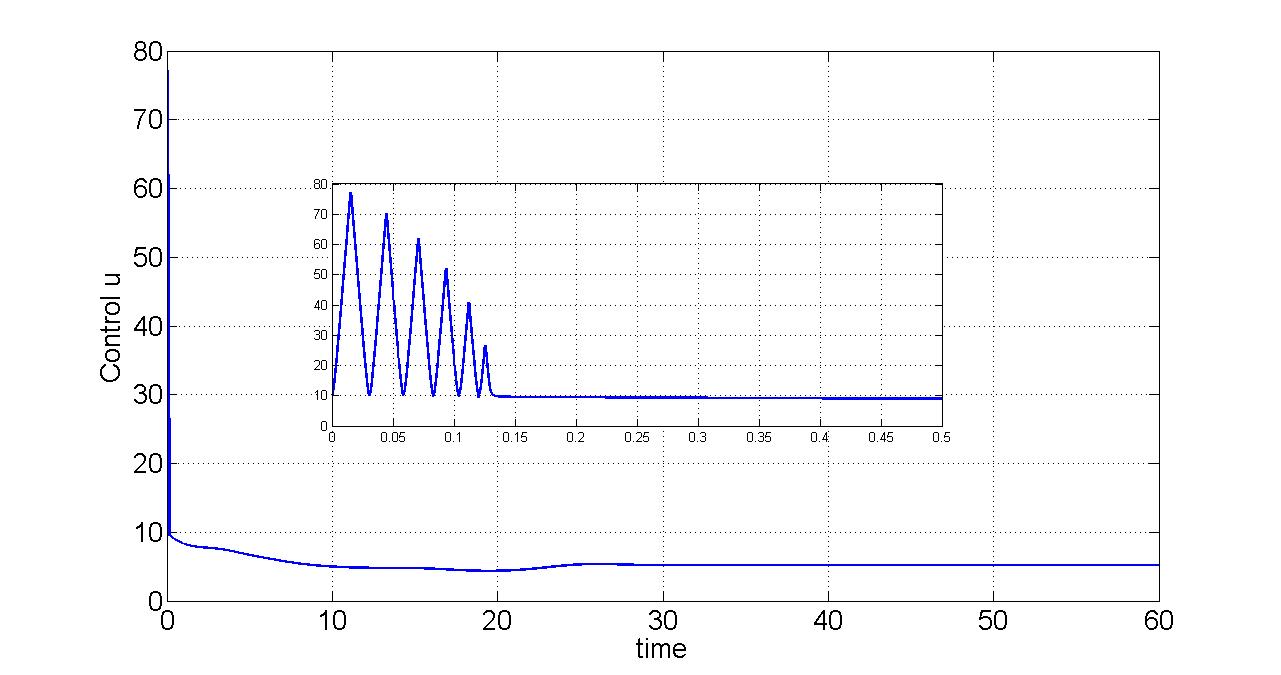}
  \caption{Control $u$} \label{com_u}
\end{figure}

\begin{figure}[htbp]%\hspace{-1.0cm}
 \centering
      \includegraphics[width=10cm]{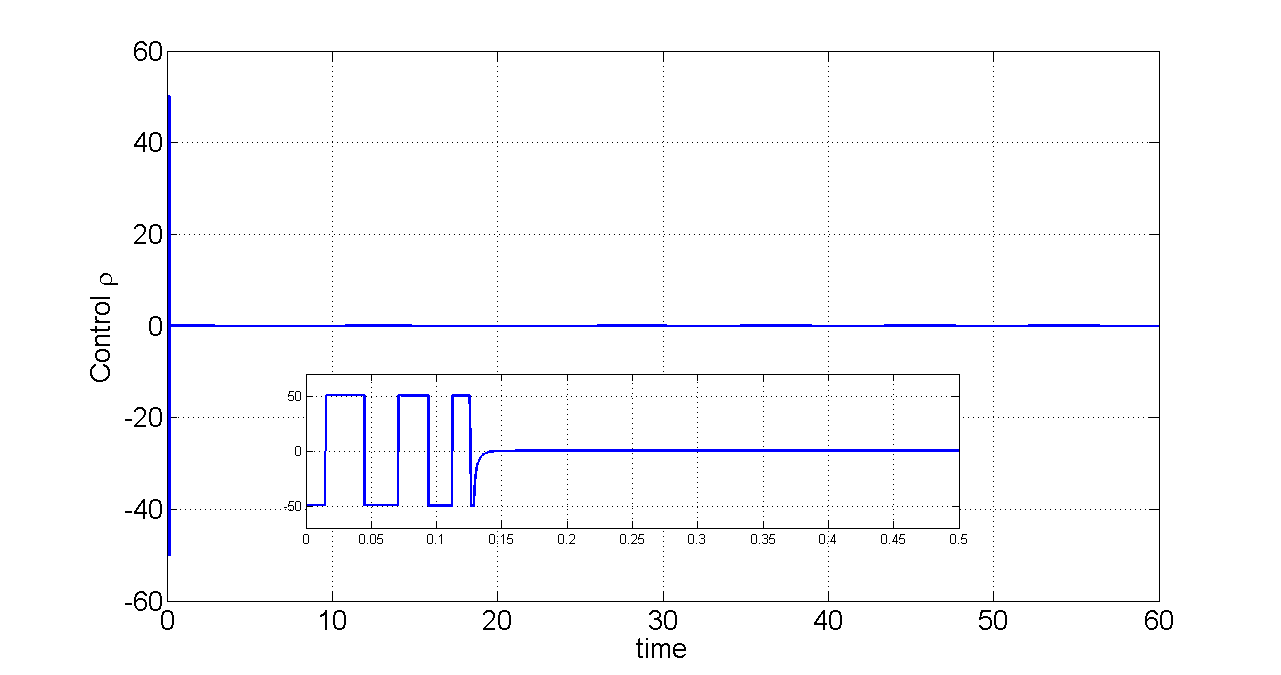}
  \caption{Control $\rho$} \label{com_ro}
\end{figure}

%%%%%%%%%%%%%%%%%%%%%%%%%%%%%%%%%%%%%%%%%%%%%%%%%%%%%%%%%%%%%%%%%%%%%%%%%%%%%%%%%%%%%%%%%%%%%%ù
%Appendix Proof of Lemmas
\section{Proof of technical lemmas}
\subsection{Proof of Lemma \ref{le:l2}}
One has
\begin{eqnarray}
\begin{array}{rcl}
\begin{gathered}
G\left( s \right) = {\left( {s{I_2} - A} \right)^{ - 1}},\hfill \\
\end{gathered}
\end{array}
\end{eqnarray}

and the $L_2$-gain is defined by
\begin{eqnarray}
\begin{array}{rcl}
\begin{gathered}
{{\rm{\Upsilon }}_{\rm{L}}} = \mathop {\sup }\nolimits_{\omega  \in {R }} \bar \sigma (G(j\omega )).\hfill \\
\end{gathered}
\end{array}
\end{eqnarray}
where $\bar \sigma$ is the largest singular value of $G(j \omega)$. Since
$$
S(\omega):=G(j\omega )G^*(j\omega )={{\omega ^2}{I_2} + j\omega \left( {A - {A^T}} \right) + {A^T}A},
$$
one has that ${{\rm{\Upsilon }}_{\rm{L}}}^2$ is the inverse of the smallest eigenvalue of $S(\omega)$.

We start the calculation of the matrix $S(\omega)$ and get
 \begin{eqnarray}
\begin{array}{rcl}
\begin{gathered}
S = \left[ {\begin{array}{*{20}{c}}
   {k_1^2 + {\omega ^2}} & {{k_1}{k_2} + j\omega \left( {1 + {k_1}} \right)}  \\
   {{k_1}{k_2} - j\omega \left( {1 + {k_1}} \right)} & {1 + k_2^2 + {\omega ^2}}  \\
\end{array}} \right].
\end{gathered}
\end{array}
\end{eqnarray}

The minimum eigenvalue is equal to
\begin{eqnarray}
\begin{array}{rcl}
\begin{gathered}
\lambda _{min}( \omega ) = \frac{{1 + k_1^2 + k_2^2 + 2{\omega ^2} - \sqrt {{{\left( {1 + k_1^2 + k_2^2 + 2{\omega ^2}} \right)}^2} - 4\left( {{\omega ^2}k_2^2 + {{\left( {{\omega ^2} - {k_1}} \right)}^2}} \right)} }}{2}.
\end{gathered}
\end{array}
\end{eqnarray}

The minimum of $\lambda _{min}$ with respect to $\omega$ is equal to
\begin{eqnarray}
\begin{array}{rcl}
\begin{gathered}
	%{\lambda _{min}} = c - \sqrt {c'}.
	\lambda _{min} = \frac{{1 + k_1^2 + k_2^2}}{2} - \sqrt {\frac{{{(1 - k_1^2)}^2 + k_2^2(k_2^2 + 2 + 2k_1^2)}}{4}}.
\end{gathered}
\end{array}
\end{eqnarray}
We deduce that
\begin{equation}\label{l2g}
 \Upsilon _L^2 =\frac12+\frac{1+k_2^2}{2k_1^2}+\frac12\sqrt{1+(\frac{2k_2}{k_1})^2+\Big(\frac{2(1+k_2^2)}{k_1^2}\Big)^2}>1.
\end{equation}
%so we find that ${\lambda _{min}} \to 1$
If we tune ${k_1} = \frac{3}{{16}}k_2^2$ and $k_2\geq 20$, then
% ${\lambda _{min}} \to 1$ for ${k_1},{\kern 1pt} {k_2}{\kern 1pt}$ large enough.\\
%For $k_2 > 20$, we find by numeric calculation that,
$0.93 < \lambda _{min} < 1$, and $1 < \Upsilon _L < 1.2$.

%%%%%%%%%%%%%%%%%%%%%%%%
\subsection{Proof of Lemma \ref{le:P_k}}
 In that case, and by using the Taylor expansion of equation (\ref{l2g}), we have the following asymptotic expansion of $\Upsilon _L^2$,
\begin{equation}\label{eq:exp_k_2}
 \Upsilon _L^2 =1+\frac{3}{2a^2k_2^2}+\frac{1}{2a^2 k_2^4}+O(\frac{1}{k_2^6}),
 \end{equation}

 Then, the Riccati equation proposed in \eqref{eq:ricatti} takes the following form
\begin{equation}\label{eq:P^2}
 \big(\frac{P_k}{ \Upsilon _L}+\Upsilon _LA)^T \big(\frac{P_k}{ \Upsilon _L}+\Upsilon _LA)=S, \hbox{ where }
 S=-I+ \Upsilon _L^2A^TA.
\end{equation}

It can easily be checked that $S$ is definite positive since $\det(S) > 0$:

\[S = \left[ {\begin{array}{*{20}{c}}
   { - 1 + \Upsilon _L^2{a^2}k_2^4} & {\Upsilon _L^2ak_2^3}  \\
   {\Upsilon _L^2ak_2^3} & { - 1 + \Upsilon _L^2\left( {1 + k_2^2} \right)}  \\
\end{array}} \right] , \]

and we get:
$$
\det(S)= \Upsilon _L^2k_2^2(a^2k_2^2(\Upsilon _L^2-1)-1)-(\Upsilon _L^2-1),
$$
and we deduce from \eqref{l2g} the following Taylor expansion for $\det(S)$,

$$
\det \left( S \right) = \frac{1}{2}\Upsilon _L^2k_2^2\big(1+\frac1{k_2^2} + O\left( \frac1{k_2^4}  \right)\big).
$$

Then, the Riccati equation (\ref{eq:P^2}), takes the form:
\[{X^T}X = S,\]
where $X=\frac{P_k}{ \Upsilon _L}+\Upsilon _LA$, and the solution is:
\begin{equation}\label{eq:X}
X=\frac{P_k}{ \Upsilon _L}+\Upsilon _LA=R_\phi\sqrt{S},
\end{equation}
where $R_\phi$ is a rotation of angle $\phi$ and $\sqrt{S}$ is the unique symmetric positive definite matrix whose square is equal to $S$.
We first estimate $\sqrt{S}$ and then $\phi$.

We clearly have
$$
S=bb^T+\gamma e_2e_2^T,
$$
where $b=\begin{pmatrix}\alpha\\ \beta\end{pmatrix}$, $e_2=\begin{pmatrix}0\\ 1\end{pmatrix}$ and
$$
\alpha=\sqrt{(\Upsilon_Lak_2^2)^2-1},\quad \beta=\frac{\Upsilon_L^2 a k_2^3}{\alpha},
\quad \gamma=\frac{\det(S)}{\alpha^2}.
$$

The asymptotic expansions of the above quantities are
$$
\begin{pmatrix}\alpha\\ \beta\end{pmatrix}=\Upsilon_Lk_2\begin{pmatrix}ak_2\big(1-\frac1{2\Upsilon_L^2a^2k_2^4}+O(\frac1{k_2^8})\big)\\ 1+\frac1{2\Upsilon_L^2a^2k_2^4}+O(\frac1{k_2^8})\end{pmatrix},
\quad \gamma=\frac1{2a^2k_2^2}(1+\frac1{k_2^2}+O(\frac1{k_2^4})).
$$
We also need the asymptotic expansions of the eigenvalues of $S$. Since
\[S = \left[ {\begin{array}{*{20}{c}}
   {{\alpha ^2}} & {\alpha \beta }  \\
   {\alpha \beta } & {{\beta ^2} + \gamma }  \\
\end{array}} \right],\]
then,

\[{\lambda _{1,2}} = \frac{\alpha ^2+\beta ^2+\gamma \pm \sqrt {\left( \alpha ^2 + \beta ^2 + \gamma  \right)^2 -4\alpha^2\gamma}}{2}\]
We immediately deduce the following asymptotic expansions for the eigenvalues of $S$,
\begin{equation}\label{eq:lam}
\lambda_1=\alpha^2+\beta^2+O(\frac1{k_2^4}),\quad \lambda_2=\gamma(1+O(\frac1{k_2^2})).
\end{equation}

We use $\bb$ to denote the unit vector $b/\Vert b\Vert$ and define the angle $\phi_b$
so that $R_{\phi_b}e_1=\bb$, where $e_1=\begin{pmatrix}1\\0\end{pmatrix}$.

We have
 $$
 \Vert b\Vert=\Upsilon _Lak_2^2\big(1+\frac1{2a^2k_2^2}+O(\frac1{k_2^4})\big),
 \qquad \bb=\begin{pmatrix}1-\frac1{2a^2k_2^2}+O(\frac1{k_2^4})\\ \frac1{ak_2}\big(1+\frac1{2a^2k_2^2}+O(\frac1{k_2^4})\big)\end{pmatrix},
 $$
 and
 $$
 R_{\phi_b}=l_2Id_2+l_1A_0,
$$
where $A_0=\begin{pmatrix}0&-1\\1&0\end{pmatrix}$ and
$$
  l_1=\frac1{ak_2}\big(1-\frac3{2a^2k_2^2}+O(\frac1{k_2^4})\big), \qquad
  l_2=(1-\frac1{2a^2k_2^2}+O(\frac1{k_2^4})).
  $$
We then get that $ R_{-\phi_b}e_2=l_2e_2-l_1e_1$. We can therefore write

$$
R_{-\phi_b}SR_{\phi_b}=\Vert b\Vert^2e_1e_1^T+\gamma R_{-\phi_b}e_2e_2^TR_{\phi_b},
$$
and deduce that
\begin{equation}\label{eq:S}
R_{-\phi_b}SR_{\phi_b}=(\Vert b\Vert^2+\gamma l_1^2)e_1e_1^T+\gamma l_2^2 e_2e_2^T
-\gamma l_1l_2(e_1e_2^T+e_2e_1^T).
\end{equation}
Finally, we seek a formula of the type
\begin{equation}\label{eq:g0}
R_{-\phi_b}\sqrt{S}R_{\phi_b}=s_1 e_1e_1^T+s_2 e_2e_2^T-s_3(e_1e_2^T+e_2e_1^T),
\end{equation}
where the $s_I$'s are positive. A simple identification leads to the equations
$$
s_1^2+s_3^2=\Vert b\Vert^2+\gamma l_1^2,
\qquad s_2^2+s_3^2=\gamma l_2^2,
\qquad s_3=\frac{\gamma l_1l_2}{Tr(\sqrt{S})}.
$$
We deduce at once from the asymptotic expansions of the eigenvalues of $S$ obtained in \eqref{eq:lam} that
\begin{equation}\label{eq:g}
s_1=\Vert b\Vert(1+O(\frac1{k_2^8})),\qquad s_2=\sqrt{\gamma}(1+O(\frac1{k_2^2})),
\qquad s_3=O(\frac1{k_2^5}).
\end{equation}

From the expression $\Upsilon _L(A-A^T)=R_\phi\sqrt{S}-\sqrt{S}R_{-\phi}$, we obtain
\[{\Upsilon _L}\left( {1 + ak_2^2} \right)\left[ {\begin{array}{*{20}{c}}
   0 & 1  \\
   { - 1} & 0  \\
\end{array}} \right] =  - Tr\left( {\sqrt S } \right)\sin \left( \phi  \right)\left[ {\begin{array}{*{20}{c}}
   0 & 1  \\
   { - 1} & 0  \\
\end{array}} \right] , \]

and we deduce that
$\Upsilon_L(1+ak_2^2)=-Tr(\sqrt{S})\sin(\phi)$, i.e.,

$$
\sin(\phi)=-\big(1-\frac{C_0^2}{2k_2^2}+O(\frac1{k_2^3})\big),
$$
where $C_0=\frac{\sqrt{1-2a}}{a}$. It implies that
$$
R_{\phi}=\frac{C_0}{k_2}\big(1+O(\frac1{k_2})\big)Id_2-\big(1-\frac{C_0^2}{2k_2^2}+O(\frac1{k_2^3})\big)A_0.
$$

We now collect the result of the equation (\ref{eq:X}) and (\ref{eq:g}) to get,
$$
\frac{P_k}{ \Upsilon _L}=-\Upsilon _L A+ R_{\phi}R_{\phi_b}
\Big[s_1 e_1e_1^T+s_2 e_2e_2^T+O(\frac1{k_2^5}))\Big]R_{-\phi_b}.
$$
After a long but straightforward computation, we arrive at \eqref{eq:P}.

%%%%%%%%%%%%%%%%%%
\subsection{Proof of Proposition \ref{le:V}}
\begin{proof}
The proof is developed using Young inequality,
$|\eta \xi| \le \frac{\epsilon\eta^2}{2} +\frac{\xi^2}{2\epsilon}$,
where $\epsilon$ is an arbitrary positive constant.

First of all, according to Lemma \ref{le:P_k}, for large $k_2$, the quadratic form $V_k(\xi,\eta)$ satisfies the following inequality
$$
	V_k(\xi,\eta)\geq F_1 k_2 \xi^2 + 2 F_2 \xi \eta + F_3 \frac{\eta^2}{k_2}-\frac{C}{k_2}(\xi^2+\frac{\eta^2}{k_2^2}),
$$
for some positive universal constant $C>0$. Then, by setting $X:=\sqrt{k_2F_1}\eta$ and
$Y:=\frac{\eta}{\sqrt{k_2F_3}}$, we obtain
$$
V_k(\xi,\eta)\geq (1-\frac{C}{k_2^2})X^2+2\frac{F_2}{\sqrt{F_1F_3}}XY+(1-\frac{C}{k_2^2})Y^2.
$$
Since $\frac{F_2^2}{F_1F_3}<1$, the above inequality ensures, for $k_2$ large enough, the existence of $l>0$ only dependent of $a$ such that
$$
V_k(\xi,\eta)\geq l \left(k_2 \xi^2 + \frac{\eta^2}{k_2}\right).
$$

The previous inequality with $M=\beta k_2$ and $k_1= a k_2^2$ computed in \eqref{eq:V} implies
\beq
	V(y_1,y_2,\xi,\eta) &\ge& l \beta k_2^2 \xi^2 + l \beta \eta^2 + \frac{a k_2^2}{2} \left(y_1^2 + y_2^2 \right) - \left| \eta y_2 \right| - \left| k_2 y_2 \xi \right|.
\eeq
By using Young's inequality, we get
\beq
	\left| \eta y_2 \right| &\le& l \beta \frac{\eta^2}{2} + \frac{1}{l\beta} \frac{y_2^2}{2},\\
	\left| k_2 y_2 \xi \right| &\le& l \beta \frac{k_2^2 \xi^2}{2} + \frac{1}{l \beta} \frac{y_2^2}{2}.
\eeq
Which implies,
\beq
	V \ge  \left( \frac{a k_2^2}{2} + \frac1{l \beta} \right) y_2^2 + \frac{l \beta k_2^2}{2} \xi^2 + \frac{l \beta}{2} \eta^2 + \frac{a k_2^2}{2} y_1^2.
\eeq
then for large enough $k_2$, $V$ is a positive quadratic form in $(\xi,\eta,y_1,y_2)$.
\end{proof}

%%%%%%%%%%%%%%%%%%
\subsection{Proof of Lemma \ref{l_sup_limite_non_satur}}
The solution of the equation (\ref{matrix_form}) is
\begin{eqnarray}
\begin{array}{rcl}
\begin{gathered}
Z\left( t \right) =\int_0^t {e^{A\left( {t - s} \right)}}U\left( s \right)ds + {e^{At{Z_0}}},\label{toto0}
\end{gathered}
\end{array}
\end{eqnarray}
where $Z_0$ is the initial value of $Z$ for $t=0$. We start by diagonalization of the matrix $A$,
whose eigenvalues are equal to
\begin{eqnarray}
\begin{array}{rcl}
\begin{gathered}
{\lambda _ + } =  - \frac{{{k_2}}}{2} + \frac{1}{2}\sqrt {k_2^2 - 4{k_1}} ,\\
{\lambda _ - } =  - \frac{{{k_2}}}{2} - \frac{1}{2}\sqrt {k_2^2 - 4{k_1}} ,\\
\end{gathered}
\end{array}
\end{eqnarray}
with corresponding eigenvectors
 ${V_ + } = \left( {\begin{array}{*{20}{c}}
   1  \\
   {{\lambda _ + }}  \\
\end{array}} \right)$
and
${V_ - } = \left( {\begin{array}{*{20}{c}}
   1  \\
   {{\lambda _ - }}  \\
\end{array}} \right)$.
From here, we obtain
$A = PD{P^{ - 1}}$, where
\begin{eqnarray}
\begin{array}{rcl}
\begin{gathered}
{\kern 1pt} D = \left( {\begin{array}{*{20}{c}}
   {{\lambda _ + }} & 0  \\
   0 & {{\lambda _ - }}  \\
\end{array}} \right);{\kern 1pt} P = \left( {\begin{array}{*{20}{c}}
   {{V_ + }} & {{V_ - }}  \\
\end{array}} \right) = \left( {\begin{array}{*{20}{c}}
   {\begin{array}{*{20}{c}}
   1  \\
   {{\lambda _ + }}  \\
\end{array}} & {\begin{array}{*{20}{c}}
   1  \\
   {{\lambda _ - }}  \\
\end{array}}  \\
\end{array}} \right);
\end{gathered}
\end{array}
\end{eqnarray}
We get
\begin{eqnarray}\label{eq:mu}
\begin{array}{l}
 {e^{A\left( {t - s} \right)}}U\left( s \right) %\\
  = \frac{1}{{{\lambda _ - } - {\lambda _ + }}}\left( {\begin{array}{*{20}{c}}
   {\left( {{\lambda _ - }{v_1} - {v_2}} \right){e^{{\lambda _ + }\left( {t - s} \right)}} + \left( {{\lambda _ + }{v_1} + {v_2}} \right){e^{{\lambda _ - }\left( {t - s} \right)}}}  \\
   {{\lambda _ + }\left( {{\lambda _ - }{v_1} - {v_2}} \right){e^{{\lambda _ + }\left( {t - s} \right)}} + {\lambda _ - }\left( {{\lambda _ + }{v_1} + {v_2}} \right){e^{{\lambda _ - }\left( {t - s} \right)}}}  \\
\end{array}} \right) \\
 \end{array}
\end{eqnarray}
The control variables $\nu_1$ and $\nu_2$ are bounded respectively by $ \kappa_{max} C_1$ and $C_2$ and we obtain that the components of the vector $\int_0^t {e^{A\left( {t - s} \right)}}U\left( s \right)ds $ are bounded componentwise by
\begin{eqnarray}
\begin{array}{rcl}
\begin{gathered}
 \left( \begin{array}{c}
 \begin{array}{*{20}{c}}
   {\frac{1}{{\left| {\begin{array}{*{20}{c}}
   {{\lambda _ + }}  \\
\end{array}} \right|}}\left( { \kappa_{max} {C_1} + \frac{{{C_2}}}{{\left| {{\lambda _ - }} \right|}}} \right)}  \\
\end{array} \\
  \kappa_{max} {C_1} + \frac{{{C_2}}}{{\left| {{\lambda _ + }} \right|}} + \frac{{{C_2}}}{{\left| {{\lambda _ - }} \right|.}} \label{toto12}\\
 \end{array} \right).
\end{gathered}
\end{array}
\end{eqnarray}

By taking $k_1 = \frac{3}{16}k_2^2$, we get
$
 {\lambda _ + } =  - \frac{{{k_2}}}{4}$ and ${\lambda _ - } =  - \frac{{3{k_2}}}{4}$.
We can bound the components of the vector defined in \eqref{toto} by
 \begin{eqnarray}\label{intg_inq}
\begin{array}{rcl}
\begin{gathered}
 \left( \begin{array}{c}
 \begin{array}{*{20}{c}}
   {\frac{4}{k_2}\left( { \kappa_{max} {C_1} + \frac{4}{{3{k_2}}}{C_2}} \right)}  \\
\end{array} \\
  \kappa_{max} {C_1} + \frac{{16}}{{3{k_2}}}{C_2} \\
 \end{array} \right).\label{toto2}
 \end{gathered}
\end{array}
\end{eqnarray}
Equation \eqref{toto1} can be deduced directly from Equations \eqref{toto0} and \eqref{toto2}. Moreover, since $A$ is Hurwitz,
we arrive to \eqref{ISS_limite1}.
%%%%%%%%%%%%%%%%%%%%%%%%
\subsection{Proof of Lemma \ref{le:11}}
The terms $|\eta\xi| $ and $ {k_2}\xi^2$ are clearly dominated by $\frac{M}4({\xi^2 + \eta^2})$.

In \eqref{lyap2}, one has $M\big(\Upsilon _L\kappa _r C_1\sigma(y_1) \big)^2\leq CM\big(C_1\sigma(y_1) \big)^2$, which is dominated by $\frac{k_1C_1}{4}y_1\sigma(y_1)$
if  $C\beta C_1\leq \frac{ak_2}{4}$. The latter clearly holds true for $k_2$ large enough.

We have $M\big(\Upsilon _L C_2\sigma(y_2) \big)^2\leq CM\big(C_2\sigma(y_2) \big)^2$, which is dominated by $C_2y_2\sigma(y_2)/4$ if $AMC_2^2\leq C_2/4$. The latter is true according to the choice of $C_2$ in \eqref{eq:param}.

We now turn to the control of the term $k_2C_1\vert y_2\kappa _r\sigma (y_1)\vert$ by $\frac{k_1C_1}{4}y_1\sigma(y_1)+\frac{C_2}{4}y_2\sigma(y_2)$. If $\vert y_2\vert\geq 1$, the second term is in control  if $\frac{C_2}4\geq Ck_2C_1$ which holds true. Assume now that $\vert y_2\vert\leq 1$. In the case where $\vert y_1\vert\geq 1$, the first term is in control  if $\frac{k_1C_1}4\geq Ck_2C_1$ which obviously holds true. It remains the case where  $\vert y_1\vert\leq 1$. It is immediate to check that the quadratic form
$\frac{k_1C_1}4y_1^2+\frac{C_2}4y_2^2-Ck_2C_1y_1y_2$ is definite positive.

We next consider the term $k_1\vert y_2\vert \vert \xi\vert^3$. To control it, we first bound $\vert \xi\vert^2$ by $\frac{C}{k_2^6}$ for $t$ large enough. In case $\vert y_2\vert\geq 1$,
then the term is immediately dominated by $\frac{C_2}4\vert y_2\vert $. Otherwise, one has, for $k_2$ large enough,
$$
k_1\vert y_2\vert \vert \xi\vert^3\leq C\frac{\vert y_2\vert}{k_2}\frac{\vert \xi\vert}{k_2^4}\leq C(\frac{y_2^2}{k_2^2}+\frac{\xi^2}{k_2^4}),
$$
the last two terms being controlled by $\frac{C_2}4\vert y_2\vert ^2+ \frac{M}4\xi^2$.

Using again the estimate $\vert \xi\vert$ by $\frac{C}{k_2^3}$  for $t$ large enough, the control of $k_1\vert y_1\vert \vert \xi\vert^2$ reduces to that of $k_2\vert y_1\xi\vert$. It therefore remain to control the latter. This is where we need the hypothesis that $y\notin Y_{k_2}$. Assume first that one wants to get the inequality
\begin{equation}\label{eq:11}
k_2\vert y_1\xi\vert\leq \frac{k_1C_1}{4}y_1\sigma(y_1).
\end{equation}
This holds true if $\vert y_1\vert\geq \frac{C}{k_2^2}$. On the other hand, if one wants to get the inequality
\begin{equation}\label{eq:22}
k_2\vert y_1\xi\vert\leq \frac{C_2}{4}y_2\sigma(y_2),
\end{equation}
it holds if $\vert y_2\vert\geq \frac{C}{k_2^{3/2}}$. In any case, outside $Y_{k_2}$,
one of the two inequalities \eqref{eq:11} or \eqref{eq:22} must hold true and Lemma\ref{le:11} is established.

Finally, with the choice of $M=\beta k_2$ together with \eqref{eq:P}, it is immediate to verfy that $V$ is positive definite. Moreover, we get
\begin{equation}\label{eq:Venc}
a_1k_2\xi^2+\frac{a_2}{k_2}\eta^2+a_3k_2^2(y_1^2+y_2^2)\leq V\leq
d_1k_2\xi^2+\frac{d_2}{k_2}\eta^2+d_3k_2^2(y_1^2+y_2^2),
\end{equation}
for some positive constants $a_i$, $d_i$, $1\leq i\leq 3$.
%%%%%%%%%%%%%%%%%%%%%%%%%%%%%%%%%%%%%%%%%%%%%%%%%%%%%%
%\bibliographystyle{plain}
%\bibliographystyle{elsarticle-num}
%\bibliographystyle{ifacconf}

\bibliography{Car_bib}

\end{document}